\title[A question of Kwakkel and Markovic]{On a question of Kwakkel and Markovic\\ on existence of wandering domains\\ with bounded geometry}
\author[S. Merenkov]{Sergei Merenkov}
\address{Department of Mathematics, The City College of New York and CUNY Graduate Center, New York, NY 10031, USA}
\email{smerenkov@ccny.cuny.edu}
\thanks{Supported by NSF grant DMS-2247364.}
\subjclass[2020]{}
\newcommand\C{{\mathbb C}}
\newcommand\N{{\mathbb N}}
\newcommand\D{{\mathbb D}}
\newcommand\Z{{\mathbb Z}}
\newcommand\T{{\mathbb T}}
\newcommand\hC{{\hat{\mathbb C}}}
\renewcommand\:{\colon}
\newcommand\Ga{\Gamma}
\newtheorem{theorem}{Theorem}[section]
\newtheorem{definition}[theorem]{Definition}
\newtheorem{lemma}[theorem]{Lemma}
\newtheorem{proposition}[theorem]{Proposition}
\newtheorem{corollary}[theorem]{Corollary}
\newtheorem{question}[theorem]{Question}
\newtheorem{remark}[theorem]{Remark}
\theoremstyle{definition}
\begin{document}


\abstract{
A question of F.~Kwakkel and V.~Markovic on existence of $\mathcal C^1$-diffeomorphisms of closed surfaces that permute a dense collection of domains with bounded geometry is answered in the negative. In fact, it is proved that for closed surfaces of genus at least one such diffeomorphisms do not exist regardless of whether they have positive or zero topological entropy.
} 
\endabstract

\maketitle


\section{Introduction}\label{s:Intro}

\noindent
In~\cite{KM10}, F.~Kwakkel and V.~Markovic posed the following question (see below for the definitions):

\begin{question}\cite[Question~1, p.~512]{KM10}
{Let $M$ be a closed surface. Do there exist diffeomorphisms $f\in{\rm Diff}^1(M)$ with positive entropy that permute a dense collection of domains with bounded geometry?}
\end{question}

The authors gave a negative answer under the assumption that $f\in{\rm Diff}^{1+\alpha}(M)$ with $\alpha>0$.
In this paper we answer the above question, also in the negative. In fact, we prove that, unless $M$ is the sphere,  there are no such diffeomorphisms regardless of whether the entropy is positive or zero. 

We recall some definitions from~\cite{KM10}. A \emph {closed surface} $M$ is a smooth, closed, oriented Riemannian 2-manifold, equipped with the canonical metric induced from the standard conformal metric of the universal cover $\hC, \C$, or $\D$, the sphere, the plane, or the unit disk, respectively. 
Let $S\subset M$ be compact and $\{D_k\}_{k\in\Z}$ the collection of connected components of the complement of $S$, with the property that ${\rm Int}({\rm Cl}(D_k))=D_k,\ k\in\Z$, where ${\rm Int}$ and ${\rm Cl}$ stand for the interior and the closure,  respectively.

\begin{definition}\label{D:PWD}
We say that a homeomorphism $f$ of $M$ permutes a dense collection of domains $\{D_k\}_{k\in\Z}$ if

\noindent
{\rm (1)} $f(S)=S$ and ${\rm Cl}(D_k)\cap {\rm Cl}(D_{k'})=\emptyset$ if $k\neq k'$,

\noindent
{\rm (2)}  $f^n(D_k)\cap D_k=\emptyset$ for all $k, n\in\Z,\ n\neq0$, and

\noindent
{\rm (3)} $\bigcup_{k\in\Z} D_k$ is dense in $M$.
\end{definition} 

We refer to the invariant set $S$ in Definition~\ref{D:PWD}  as a \emph{residual set} of $f$.

\begin{definition}
A collection of domains $\{D_k\}_{k\in\Z}$ on a surface $M$ is said to have bounded geometry if ${\rm Cl}(D_k),\ k\in\Z$,  are contractible in $M$ 
and there exists a constant $C\ge1$ such that for every domain $D_k$ in the collection, there are $p_k\in D_k$ and $0<r_k\le R_k$ with 
$$
B(p_k, r_k)\subseteq D_k\subseteq B(p_k, R_k)\quad {\rm and}\quad R_k/r_k\le C.
$$ 
Here and in what follows, $B(p,r)$ denotes the open disk in $M$ centered at $p$ of radius $r>0$. 
\end{definition}
Note that if $M$ is a closed surface, bounded geometry of domains in $\{D_k\}_{k\in\Z}$ implies that ${\rm diam}\, D_k\to0$ as  $|k|\to\infty$, where ${\rm diam}$ stands for the diameter.

The  main result of this paper is the following theorem.

\begin{theorem}\label{T:Main}
If $M$ is a closed surface, other than the sphere, then there does not exist $f\in{\rm Diff}^1(M)$ that permutes a dense collection of domains with bounded geometry.
\end{theorem}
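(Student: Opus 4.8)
The plan is to extract quasiconformal information from the hypotheses and then contradict the conformal rigidity of closed surfaces of genus $\geq1$. First, some reductions. Since $M$ is compact and $f\in\mathrm{Diff}^1(M)$, the norms $\|Df\|$ and $\|Df^{-1}\|$ are bounded, so $f$ is bi-Lipschitz; and the pointwise linear dilatation $H(Df_x)=\|Df_x\|\,\|Df_x^{-1}\|$ is continuous, so $f$ is $K$-quasiconformal with $K=\sup_x H(Df_x)<\infty$. Next, for a fixed $D_k$, condition~(2) makes $\{f^n(D_k)\}_{n\in\Z}$ a pairwise disjoint subfamily of the collection, so $\sum_{n\in\Z}\area(f^n(D_k))\le\area(M)<\infty$; bounded geometry gives $\area(f^n(D_k))\asymp(\diam f^n(D_k))^2$, hence $\diam f^n(D_k)\to0$ as $|n|\to\infty$. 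Finally one checks, using that the closures $\overline{D_k}$ are pairwise disjoint and $S$ has empty interior, that $S\ne\emptyset$, that there are infinitely many domains, and that domains of arbitrarily small diameter accumulate at every point of $S$.

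The heart of the matter is an infinitesimal estimate where bounded geometry enters. Fix $n\in\Z$ and $x\in S$. Given $\eps>0$, choose a domain $D_k$ with $p_k$ within $\eps$ of $x$ and with $\diam D_k$ so small that the $C^1$-Taylor error of $f^n$ on $D_k$ is negligible compared with $\diam f^n(D_k)$; this is possible for fixed $n$ because $Df^n$ is uniformly continuous and $\diam f^n(D_k)\gtrsim L^{-|n|}\diam D_k$, where $L$ is the Lipschitz constant. Then $f^n(D_k)=D_{k'}$ is, up to that error, the ellipse $f^n(p_k)+Df^n_{p_k}(D_k-p_k)$; comparing the sandwiching balls $B(p_k,r_k)\subseteq D_k\subseteq B(p_k,R_k)$ with $B(p_{k'},r_{k'})\subseteq D_{k'}\subseteq B(p_{k'},R_{k'})$ yields $\sigma_{\max}(Df^n_{p_k})\,r_k\le R_{k'}(1+o(1))$ and $\sigma_{\min}(Df^n_{p_k})\,R_k\ge r_{k'}(1+o(1))$, hence $H(Df^n_{p_k})\le C^2(1+o(1))$. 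Letting $\eps\to0$ (so $p_k\to x$ with the error $\to0$) and using continuity of $Df^n$ gives $H(Df^n_x)\le C^2$ for every $n\in\Z$ and every $x\in S$.

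This makes $f|_S$ ``uniformly quasiconformal'' at the level of the derivative cocycle, which I want to upgrade to an invariant conformal structure. For $x\in S$, the space of conformal structures at $x$ is a hyperbolic plane, and the $C^2$-bound says the orbit $\{(Df^n_x)^*\sigma_0\}_{n\in\Z}$ of the standard structure is bounded there (diameter $\lesssim\log C$). Its circumcenter $\tau_x$ depends measurably on $x$, and since $(Df_x)^*$ maps, isometrically and bijectively, the bounded set at $f(x)$ onto the reindexed bounded set at $x$, it maps circumcenter to circumcenter: $(Df_x)^*\tau_{f(x)}=\tau_x$. So $\tau$ is an $f$-invariant measurable conformal structure on $S$ at bounded distance from the standard one. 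The goal is then to produce such a structure, $f$-invariant and bounded, on all of $M$; solving the associated Beltrami equation yields a quasiconformal $h\colon M\to M'$ onto a closed Riemann surface $M'$ of the same genus $\ge1$ for which $g:=h\circ f\circ h^{-1}$ is a holomorphic automorphism of $M'$. But a holomorphic automorphism of a closed Riemann surface of genus $\ge1$ preserves a smooth area form (of the hyperbolic metric when the genus is $\ge2$, of a flat metric when it is $1$), whereas $g$ permutes the dense collection $\{h(D_k)\}$ of nonempty open sets with $g^n(h(D_k))\cap h(D_k)=h(f^n(D_k)\cap D_k)=\emptyset$ for $n\ne0$; summing the equal positive areas of the pairwise disjoint sets $g^n(h(D_k))$, $n\in\Z$, gives $\infty\le\area(M')<\infty$, a contradiction.

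I expect the main obstacle to be the globalization in the third step: the estimate of the second step controls $Df^n$ only on the residual set $S$, and transporting a bounded invariant conformal structure across the complementary domains $D_k$ requires more. Here I anticipate using the roundness of the $D_k$ together with their wandering, very likely split according to whether $\area(S)>0$ (where the structure already built on $S$ should suffice to run the Beltrami step after extending it by the standard structure on $\bigcup_k D_k$, since that set is then null) or $\area(S)=0$ (where one must also control the derivative cocycle on $\bigcup_k D_k$, again via bounded geometry along orbits). Getting this last point cleanly — equivalently, establishing genuine uniform quasiconformality of $\{f^n\}$ — is where the argument is most delicate.
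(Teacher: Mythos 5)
Your first two steps reproduce the paper's opening moves: the uniform bound $H(Df^n_x)\le C^2$ on $S$ is exactly Lemma~2.3 (proved there by the same rescaling/Taylor argument), and the circumcenter construction of an $f$-invariant bounded conformal structure on $S$ is exactly Proposition~2.4 in the style of Tukia. The divergence — and the gap — is entirely in your third step, the ``globalization,'' and it is not a repairable technicality but the crux of the problem. After solving the Beltrami equation, the paper obtains a quasiconformal conjugate $g=\phi\circ f\circ\phi^{-1}$ that is conformal \emph{only on} $S'=\phi(S)$, not on $M'$; it is emphatically \emph{not} a holomorphic automorphism. You cannot upgrade this, because the uniform dilatation bound of step~1 rests on the fact that arbitrarily small wandering domains accumulate at every point of $S$, which is precisely what fails inside a wandering domain $D_k$: there the dilatation of $f^n$ can grow like $K^n$, and bounded geometry of the images $f^n(D_k)$ constrains only the shape of their boundaries, not the distortion of $f^n$ in their interiors. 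Neither of your two contemplated cases closes the gap: $\area(S)>0$ does not force $\bigcup_k D_k$ to be null (they are complementary, and both can have positive measure); and when $\area(S)=0$ the object you would need is exactly a uniform bound on the derivative cocycle over $\bigcup_k D_k$, which bounded geometry does not give. (If you \emph{could} make $g$ holomorphic on a closed surface of genus $\ge1$, your contradiction would indeed follow — in genus $\ge2$ even more simply, since the automorphism group is finite — but that is the step that does not go through.)

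Because of this, the paper is forced onto an entirely different route after establishing conformality on $S$. It introduces a transboundary modulus ${\rm mod}_S$ adapted to the complementary domains, proves existence and uniqueness of extremal mass distributions (Proposition~3.1), and shows that ${\rm mod}_S$ is genuinely invariant under maps conformal on $S$ (Lemma~3.6). It then runs the Nielsen--Thurston trichotomy on the isotopy class of $f$: pseudo-Anosov is excluded by comparing the invariance of ${\rm mod}_S(f^n\Gamma)$ against the exponential growth $l[f^n(\gamma)]\asymp\lambda^n$ via an estimate ${\rm mod}_S(\Gamma)\lesssim l(\gamma)^{-2}$ (Lemma~5.4); periodic and reducible are excluded by producing an $f$-invariant curve class, taking the Jenkins--Strebel quadratic differential whose $q$-metric is extremal, and showing that the unique extremal transboundary distribution cannot be purely continuous (Proposition~7.2), contradicting Lemma~7.1. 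None of this appears in your proposal, and it is precisely the machinery required to extract a contradiction from a map that is conformal only on the residual set.
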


Theorem~\ref{T:Main} is false in the case when $M=\hC$, as can be seen by ta\-king $f(z)=\lambda z,\ \lambda>1$. Indeed, one needs to fill the fundamental annulus $\{z\: 1\le |z|<\lambda\}$ with geometric disks and spread them out using the dynamics of $f$. 
In addition, without the bounded geometry assumption the conclusion fails even in the $\mathcal C^{1+\alpha}$ class. This can be seen by choosing two $\mathcal C^{1+\alpha}$ diffeomorphisms  $g, h$ of the unit circle $\T^1$ that are semi-conjugate but not conjugate to irrational rotations, i.e., $g$ and $h$ have wandering intervals, and taking $f=g\times h$, a diffeomorphism of $\T^2$. Such diffeomorphisms $g, h$ exist by~\cite{He79}, and  $f$ permutes the collection $\{I\times J\}$, where $I$ and $J$ are wandering intervals of $g$ and $h$, respectively. 

The paper is organized as follows. In Section~\ref{S:ICS} we demonstrate existence of an invariant conformal structure on the residual  set of a hypothetical $\mathcal C^1$-diffeomorphism $f$ of $M$ that permutes a dense collection of domains with bounded geometry. In Section~\ref{S:TMS} we discuss transboundary modulus of curve families and prove existence and uniqueness of extremal distributions for such a modulus. After a short discussion of the Nielsen--Thurston classification in Section~\ref{S:PA}, we proceed by first eliminating in Section~\ref{S:PAD} the possibility of a hypothetical diffeomorphism $f$ above to be pseudo-Anosov, and then, with the aid of holomorphic quadratic differentials discussed in Section~\ref{S:HQD}, we exclude periodic and reducible diffeomorphisms in the final Section~\ref{S:PR}. 

\medskip
\noindent
{\bf Acknowledgements.}
The author thanks Mario Bonk and Misha Lyubich for fruitful conversations, and the Institute for Mathematical Sciences at Stony Brook University for the hospitality. The author also thanks both anonymous referees for numerous comments and suggestions that helped to substantially improve the presentation, especially that of Section~\ref{S:PAD}.

\section{Invariant Conformal Structure}
\label{S:ICS}

\noindent
A non-constant orientation preserving  homeomorphism  $h\: U \to U'$
between  open  subsets $U$ and $U'$ of Riemann surfaces $M$ and $M'$, respectively, is called \emph{quasiconformal}, or $K$-\emph{quasiconformal}, if the map $h$ is in the Sobolev space $W_{\rm loc}^{1,2}$
and 
$$
K_h(p)=\frac{\max_{|v|=1}|D_ph(v)|}{\min_{|v|=1}|D_ph(v)|}\le K,\quad {\rm a.e.}\  p\in U,
$$
where $|\cdot|$ is the Euclidean norm.
In this case we say that the \emph{dilatation} of $h$ is bounded by $K$.
The assumption $h\in W_{\rm loc}^{1,2}$ means that $h$ and the first distributional partial derivatives of $h$ are locally in the Lebesgue space $L^2$.   
If $S$ is a measurable subset of $U$ and $K_h(p) = 1$ for a.e. $p \in S$, then
the map $h$ is called \emph{conformal} on $S$. 

We will need the following lemma, which we also prove.

\begin{lemma}\cite[Lemma~2.3]{KM10}\label{L:UQC}
Let $M$ be a closed surface and let $f\in{\rm Diff}^1(M)$ permute a dense collection of domains $\{D_k\}_{k\in\Z}$. If the domains in $\{D_k\}_{k\in\Z}$ have bounded geometry, then $f^n,\ n\in\Z$, have uniformly bounded  dilatations on $S=M\setminus\cup_{k\in\Z} D_k$. I.e., $K_{f^n}(p)$ is   bounded above by a constant independent of $n\in\Z$ and $p\in S$. 
\end{lemma}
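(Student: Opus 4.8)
The plan is to show that each $f^n$ maps a small ball inside a domain $D_k$ to a region that is itself trapped between two comparable balls, and that the comparability constant does not depend on $n$. Since $f\in{\rm Diff}^1(M)$, for every point $p\in M$ the derivative $D_pf$ exists and is a linear isomorphism of the tangent space; the quantity $K_{f^n}(p)$ is exactly the ratio of the largest to the smallest singular value of $D_p(f^n)$, i.e.\ the eccentricity of the infinitesimal ellipse $D_p(f^n)(B(0,1))$. The strategy is to capture this infinitesimal eccentricity from the \emph{macroscopic} distortion of the domains. Fix $p\in S$ and $n\in\Z$. I will use the bounded geometry of $\{D_k\}$ twice: once at the source side and once at the target side.

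First I would localize. Let $p\in S$ and pick, for small $\rho>0$, a domain $D_k$ of the collection whose closure meets $B(p,\rho)$; this is possible because $\bigcup_k D_k$ is dense and, by bounded geometry, $\operatorname{diam} D_k\to0$, so we may find $D_k$ with $p\in{\rm Cl}(D_k)$ is not needed — rather, I choose a sequence $D_{k_j}$ with ${\rm Cl}(D_{k_j})$ accumulating at $p$ and $\operatorname{diam} D_{k_j}\to0$. By bounded geometry there are $p_{k_j}\in D_{k_j}$ and radii $r_j\le R_j$ with $B(p_{k_j},r_j)\subseteq D_{k_j}\subseteq B(p_{k_j},R_j)$ and $R_j/r_j\le C$. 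Condition~(1) of Definition~\ref{D:PWD} says $f(S)=S$, and since $f$ is a homeomorphism permuting the components of $M\setminus S$, each $f^n$ sends $D_{k_j}$ onto some other domain $D_{k'_j}$ of the collection, which again by bounded geometry satisfies $B(p_{k'_j},r'_j)\subseteq D_{k'_j}=f^n(D_{k_j})\subseteq B(p_{k'_j},R'_j)$ with $R'_j/r'_j\le C$. Thus $f^n$ carries a region of eccentricity $\le C$ onto a region of eccentricity $\le C$, and this holds for a whole sequence of domains shrinking to $p$.

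The key step is to pass to the infinitesimal limit. Because $f^n\in\mathcal C^1$, near $p$ we have $f^n(q)=f^n(p)+D_p(f^n)(q-p)+o(|q-p|)$ in local coordinates (and similarly the conformal background metrics of $M$ are comparable to the Euclidean one on a fixed neighborhood, contributing only a bounded factor depending on $M$). Writing $D_p(f^n)$ in terms of its singular values $0<\mu_j^-\le\mu_j^+$ — note these depend on $n$ through the same map — the image $f^n(B(p_{k_j},R_j))$ is, up to $o(R_j)$ error and a bounded metric comparison, contained in an ellipse with semi-axes $\mu^+_j R_j$ and $\mu^-_j R_j$ centered near $f^n(p)$ while $f^n(B(p_{k_j},r_j))$ contains an ellipse with semi-axes $\mu^-_j r_j$ and $\mu^+_j r_j$ — wait, more carefully: $f^n(B(p_{k_j},r_j))\supseteq$ an ellipse of semi-axes $\mu^-_j r_j\le \mu^+_j r_j$. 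Comparing with the two genuine balls $B(p_{k'_j},r'_j)\subseteq f^n(D_{k_j})\subseteq B(p_{k'_j},R'_j)$ and using $R_j/r_j\le C$ forces
$$
\frac{\mu^+_j}{\mu^-_j}\ \le\ \text{(const)}\cdot \frac{R'_j}{r'_j}\cdot\frac{R_j}{r_j}\ \le\ \text{(const)}\cdot C^2,
$$
with the constant depending only on $M$ (through the metric comparison) but \emph{not} on $n$. Letting $j\to\infty$, the singular value ratios $\mu^+_j/\mu^-_j$ converge to $K_{f^n}(p)$ by continuity of $D(f^n)$, since $p_{k_j}\to p$ and the error terms are $o(1)$ after rescaling; this gives $K_{f^n}(p)\le \text{const}\cdot C^2$ for a.e.\ (in fact every) $p\in S$, uniformly in $n$.

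The main obstacle I anticipate is making the infinitesimal comparison rigorous at points of $S$ that are \emph{not} on the boundary of any $D_k$: for such a point the domains $D_{k_j}$ accumulating to $p$ may lie "off to the side" rather than surrounding $p$, so $B(p_{k_j},R_j)$ need not contain $p$. The fix is to not require $p\in B(p_{k_j},R_j)$ at all; instead apply the first-order Taylor expansion of $f^n$ at the nearby point $p_{k_j}$ (not at $p$) — then $f^n(B(p_{k_j},R_j))$ is genuinely an ellipse-plus-$o(R_j)$ centered at $f^n(p_{k_j})$ with axis ratio $K_{f^n}(p_{k_j})$, and the squeeze argument yields $K_{f^n}(p_{k_j})\le\text{const}\cdot C^2$; finally, since the points $\{p_k\}_{k}$ are dense in $M$ (again by density of $\bigcup D_k$ and $\operatorname{diam} D_k\to0$) and $p\mapsto K_{f^n}(p)$ is continuous, the bound propagates to all of $S$. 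One should also take care that the metric-comparison constant between the Riemannian metric on $M$ and the Euclidean metric in charts can be taken uniform — this is fine because $M$ is compact — so it can be absorbed into the single final constant.
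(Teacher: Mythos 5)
Your proposal is correct and follows essentially the same route as the paper's proof: fix $n$, find domains $D_{k_j}$ accumulating at $p$, use the $\mathcal C^1$ first-order expansion together with the ball-sandwich from bounded geometry on both source and image sides, squeeze the eccentricity of $D f^n$, and pass to the limit using continuity of the derivative. The paper formalizes the squeeze by rescaling $(D_{k_j}-p_{k_j})/r_j$ and its image and taking Hausdorff limits $\Omega$, $\Omega'$ that satisfy $\Omega'=D_p f^n(\Omega)$; your version works scale by scale and lets $j\to\infty$ at the end, which is the same idea. Your ``fix'' of expanding at $p_{k_j}$ rather than at $p$ (and then appealing to continuity of $Df^n$, since $p_{k_j}\to p$) matches exactly what the paper does by invoking the uniformity in $q$ of the Taylor remainder; this is indeed the step that makes the argument work when $p$ is not on $\partial D_{k_j}$.
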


\begin{remark}
Lemma~\ref{L:UQC} is the only result where the assumption that $f$ is a $\mathcal C^1$-diffeomorphism is needed to prove the main Theorem~\ref{T:Main}. Once Lemma~\ref{L:UQC} is proved below, the rest of the arguments hold true for an arbitrary quasiconformal $f$.
\end{remark}

\begin{proof}[Proof of Lemma~\ref{L:UQC}] 
Let $\widetilde M$ denote the universal cover of $M$ and $\psi\: \widetilde M\to M$ be the projection map. 
Since $\psi$ is a local isometry, below we make no distinction between  $D_k,\ k\in\Z$, that have sufficiently small diameters and their lifts to $\widetilde{M}$ and, whenever convenient, identify the map $f$ with its lift $\tilde f$  to $\widetilde{M}$ under the map  $\psi$. 
 
Let $n\in\Z$ be fixed.
Since $S$ is nowhere dense,  ${\rm Int}({\rm Cl}(D_k))=D_k,\ k\in\Z$, and the closures ${\rm Cl}(D_k),\ k\in\Z$, are pairwise disjoint,  and ${\rm diam}\, D_k \to 0$ as $|k| \to \infty$, which follows from the bounded
geometry assumption, for each $p\in S$ there exists a sequence of complementary components $(D_{k_i})_{i\in\N}$ of $S$ that accumulate at $p$, i.e., 
$$
 d_{\rm Hausd}(\overline{D_{k_i}},\{p\})\to0\  {\rm as}\  i\to\infty,
$$  
where  $d_{\rm Hausd}$ denotes the Hausdorff distance.
Also, since $f$ is $\mathcal C^1$-dif\-fe\-ren\-ti\-able on a compact surface $M$,  we have
\begin{equation}\label{E:Diff}
f^n(q+x)-f^n(q)-D_qf^n(x)=o(x),\quad x\to0,
\end{equation}
where $|o(x)|/|x|\to0$ as $|x|\to0$, uniformly in $q$. Note that, as discussed in the first paragraph of this proof, here we abused the notation and wrote $f^n$ instead of $\psi^{-1}\circ f^n\circ\psi$ with consistent choice of the branch of $\psi^{-1}$, and $q$ and $x$ are elements of the universal cover, i.e., $\D, \C$, or $\C\cup\{\infty\}$.  

The domains $D_{k_i},\ i\in\N$, having bounded geometry means that the\-re exist $C\geq1, p_i\in D_{k_i}$, and $0<r_i\leq R_i,\ i\in \N$, with
$$
B(p_i, r_i)\subseteq D_{k_i}\subseteq B(p_i, R_i),\quad {\rm and}\quad {R_i}/{r_i}\leq C.
$$
Thus, by possibly passing to a subsequence, we may assume that the Hausdorff limit of the rescaled domains 
$$
\left((D_{k_i}-p_{i})/r_{i}\right)_{i\in\N}
$$ 
is a closed set $\Omega$ such that 
\begin{equation}\label{E:C1}
\overline{B(0,1)}\subseteq\Omega\subseteq \overline{B(0, C)}.
\end{equation}
Again, here we identified $p_i$ and $D_{k_i},\ i\in\N$, with the corresponding lifts under the projection map $\psi$.
 Let $D_{k_i}',\ i\in\N$,  denote the image of $D_{k_i},\ i\in\N$, under the map $f^n$. 
Because $f^n$ is in ${\rm Diff}^1(M)$, and hence is locally bi-Lipschitz, after possibly passing to yet a further subsequence, we may assume that the sequence 
$$
\left((D_{k_i}'-f^n(p_{i}))/r_{i}\right)_{i\in\N}
$$ 
Hausdorff converges to a closed set $\Omega'$. 
Similar to~\eqref{E:C1}, we know that there exists $p'\in\Omega'$ and $r'>0$ such that 
\begin{equation}\label{E:C2}
\overline{B(p',r')}\subseteq\Omega'\subseteq \overline{B(p',Cr')},
\end{equation}
where the constant $C\ge1$ is the same as above.
Since Equation~\eqref{E:Diff} is uniform in $q$, in particular it  holds for $q=p_{i},\ q+x\in D_{k_i},\ i\in\N$, one concludes that 
\begin{equation}\label{E:C3}
\Omega'=D_pf^n(\Omega). 
\end{equation}
Combining~\eqref{E:C1}, \eqref{E:C2}, and \eqref{E:C3}, and using the fact that the constant $C$ does not depend on $p\in S$ or  $n\in\Z$, we conclude that the maps $f^n,\ n\in\Z$, have uniformly bounded dilatations on $S$, as claimed.
\end{proof}

In the proof of Theorem~\ref{T:Main} we use a modification, Proposition~\ref{P:Tukia} below, of a result by D.~Sullivan~\cite{Su78} or P.~Tukia~\cite{Tu80}. 
A \emph{Beltrami form} on a measurable subset $S$ of a Riemann surface $M$ is a measurable $(-1,1)$-form on $S$ given in a local chart by $\mu=\mu(z)d\bar z/dz$, where we assume that $\|\mu\|_\infty<1$; see, e.g., \cite{BF14} for background on Beltrami forms. The measurable function $z\mapsto \mu(z)$ is called a \emph{Beltrami coefficient} of $\mu$. It is dependent on a local chart.
If $h\: U\to U'$ is a quasiconformal map between open subsets $U$ and $U'$ of Riemann surfaces $M$ and $M'$, respectively, and $\mu$ is a Beltrami form on $U'$, the \emph{pullback} of $\mu$ under $h$ is the Beltrami form given in local charts by
$$
h^*(\mu)=\left(\frac{h_{\bar z}+\mu(h)\overline{h_z}}{h_{z}+\mu(h)\overline{h_{\bar z}}}\right)\frac{d\bar z}{dz}.
$$ 

The pullback of the 0 Beltrami form is denoted by $\mu_h$.
We say that a Beltrami form $\mu$ on $S$ is  \emph{invariant} under $h$
if $S$ is \emph{invariant} under $h$, i.e., $S\subseteq U=U',\ h(S)=S$ up to a measure zero set, and
$
\mu=h^*(\mu).
$ 
In particular, the 0 Beltrami form is invariant under $h$ if and only if $h$ is conformal on $S$.

\begin{proposition}\label{P:Tukia}
Let $M$ be a Riemann surface and $S\subseteq M$ be a measurable subset. Let $H$ be a countable group of quasiconformal maps $h$ of $M$ that leave $S$ invariant  and that have uniformly bounded dilatation on $S$. Then there exists a Beltrami form $\mu$ on $S$ that is invariant under each map $h\in H$. 
\end{proposition}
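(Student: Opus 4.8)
The plan is to produce the invariant Beltrami form by an averaging argument over the group $H$, following the classical Sullivan--Tukia strategy but carried out intrinsically on the measurable set $S$. Recall that Beltrami coefficients do not average linearly, so instead I would pass to the standard model where averaging is natural: the hyperbolic disk $\D$ of Beltrami coefficients $\{\mu : \|\mu\|_\infty < 1\}$ carries the Poincar\'e metric, and a $K$-quasiconformal map acts on this disk (pointwise a.e.) by a M\"obius transformation whose hyperbolic displacement is bounded in terms of $K$. Concretely, for a fixed local chart and a.e. $p \in S$, the pullback operation $\mu \mapsto h^*(\mu)$ is, after the obvious change of variables accounting for how $h$ moves $p$ and distorts the chart, an isometry of the hyperbolic disk of coefficients at that point; and the ``amount of room'' it can move the zero coefficient is controlled by the uniform dilatation bound on $S$.

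The key steps, in order, would be as follows. First I would reduce to a countable atlas: cover $M$ by countably many charts, and since $H$ is countable and each $h$ is a.e.\ differentiable, work on the full-measure subset of $S$ where all relevant derivatives of all elements of $H$ exist and the cocycle relations hold pointwise. Second, fix $p$ in this good set and consider the orbit $\{\mu_{h}(p) : h \in H\}$ of coefficients obtained as pullbacks under $H$ of the zero form; the uniform dilatation bound gives that this orbit lies in a hyperbolic ball of radius $r = r(K) < \infty$ in the disk of coefficients at $p$. Third, invoke the fact that a bounded subset of the hyperbolic plane (a $\mathrm{CAT}(-1)$, hence uniquely geodesic and ``ball-convex'' space) has a unique circumcenter --- the center of the smallest closed hyperbolic ball containing the orbit closure. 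Define $\mu(p)$ to be this circumcenter. Fourth, check measurability of $p \mapsto \mu(p)$: the circumcenter of the closure of a countable family of measurable functions $p \mapsto \mu_h(p)$ is measurable, because the radius function is a countable sup of measurable functions and the circumcenter depends measurably (indeed continuously, by uniqueness) on the bounded set. Fifth, verify invariance: for any $g \in H$, pulling back by $g$ sends the orbit $\{\mu_h(p')\}$ (at the preimage point $p' $ with $g(p')=p$, say) isometrically to the orbit at $p$ reindexed by $h \mapsto hg$, which has the same closure since $H$ is a group; by uniqueness of the circumcenter, $g^*(\mu)(p') = \mu(p')$ a.e., i.e.\ $\mu = g^*(\mu)$. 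Finally, $\|\mu\|_\infty \le \tanh r < 1$, so $\mu$ is a genuine Beltrami form.

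The main obstacle --- and the step requiring the most care --- is making the hyperbolic-geometry picture of pullback precise and verifying that it is genuinely an isometric action, so that ``uniform dilatation on $S$'' translates into ``orbit of bounded hyperbolic diameter'' and so that pullback by $g \in H$ carries circumcenters to circumcenters. This is where one must be careful that the conformal structure of $M$, the choice of local charts, and the fact that the maps move the base point $p$ all enter correctly: the clean statement is that for a quasiconformal $h$ with $h(p')=p$, the map $\nu \mapsto h^*(\nu)$ is, in suitable chart coordinates at $p$ and $p'$, the composition of the M\"obius action of the differential $D_{p'}h$ on the disk of coefficients with a rotation, hence an orientation-preserving hyperbolic isometry; and the $H$-cocycle relation $(hg)^* = g^* \circ h^*$ then makes the reindexing argument work. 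I would isolate this as a short lemma about the action of quasiconformal germs on the hyperbolic disk of Beltrami coefficients, after which the circumcenter construction and the measurability and invariance checks are routine.
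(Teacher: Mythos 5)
Your proposal follows essentially the same route as the paper: both pass to the hyperbolic disk of Beltrami coefficients, observe that the uniform dilatation bound on $S$ confines the orbit $\{\mu_h(p)\}_{h\in H}$ to a bounded hyperbolic ball, take the unique circumcenter $P(B(p))$ (Tukia's construction rather than Sullivan's barycenter, which the paper notes as an alternative), and verify $H$-invariance by combining the cocycle identity $\mu_{h'\circ h}=T_h(\mu_{h'}(h(\cdot)))$ with the equivariance $P(T(B))=T(P(B))$ of the circumcenter under hyperbolic isometries. You additionally flag measurability of $p\mapsto\mu(p)$ and the dependence on charts, which the paper treats more lightly, but the argument is the same.
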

\begin{proof}
We may assume that $S$ has positive measure, for otherwise there is nothing to prove.

For arbitrary quasiconformal maps $f, g$, expressing the Beltrami coefficient $\mu_{g\circ f}$ in terms of Beltrami coefficients $\mu_g$ and $\mu_f$ in some local charts, we have
\begin{equation}\label{E:Composition}
\mu_{g\circ f}=f^*(\mu_g)=T_{f}(\mu_g(f)),
\end{equation}
where
$$
T_{f}(w)=\frac{\mu_f+\frac{\overline{f_z}}{f_z}w}{1+\overline{\mu_f}\frac{\overline{f_z}}{f_z}w}
$$
is an isometry of the disk model hyperbolic plane $\D$ for a.e.\  $z$. Namely, $T_f$ is a hyperbolic isometry in $w$ for each fixed $z$ where $T_f$ is defined, i.e., at each point  of differentiability of $f$ with $f_z\neq0$. Such points $z$ form a set of full measure.

We consider
$$
B(z)=\{\mu_{h}(z)\:\ h\in H\},
$$
a subset of $\D$ defined for a.e.\ $z\in  S$ up to a rotation which depends on a local chart, since if $\zeta=\zeta(z)$ is a conformal change of local charts, $\mu_h(z)=\mu_h(\zeta(z))\frac{\overline{\zeta'(z)}}{\zeta'(z)}$, i.e., the value of $\mu_h$ is multiplied by a complex number of modulus 1, a rotation. 
Since $H$ is a group, Equation~\eqref{E:Composition} gives 
\begin{equation}\label{E:Transform}
\begin{aligned}
T_{h}(B(h(z)))&=\left\{T_h(\mu_{h'}(h(z)))\:h'\in H\right\}\\&=\left\{\mu_{h'\circ h}(z)\:h'\in H\right\}=B(z),
\end{aligned}
\end{equation}
for each $h\in H$, a.e.\ $z\in S$, and an appropriate choice of local charts $z$ and $h(z)$.
If $B$ is a non-empty bounded subset of $\D$, we define by $P(B)$ to be the center of the smallest closed hyperbolic disk that contains $B$. Elementary hyperbolic geometry  shows that $P(B)$ is well defined.
Moreover, $P$ satisfies  (see~\cite{Tu80} for details): 

\noindent
1) if $B$ is contained in a hyperbolic disk centered at 0 with radius $\delta$ in the hyperbolic metric of $\D$, then $P(B)$ is at most hyperbolic distance $\delta$ from the origin in $\D$;

\noindent
2) the map $P$ commutes with each hyperbolic isometry of $\D$, i.e., 
$$P(T(B))=T(P(B))$$ for any hyperbolic isometry $T$ of $\D$ and arbitrary non-empty bounded subset $B$ of $\D$.  

We now define, for a.e.\ $z\in S$, 
$$
\mu(z)=
P(B(z)).
$$
This definition, even though dependent on local charts, gives rise to a global Beltrami form $\mu$ on $M$ because $P$ commutes with hyperbolic isometries, and in particular with rotations.
The assumption that the elements of $H$ are $K$-quasiconformal on $S$ with the same constant $K$ and property 1) above give that $\mu$ is an essentially bounded Beltrami form.
Equation~\eqref{E:Transform} combined with the commutative  property 2) give, in appropriate local charts,
$$
\mu(z)=P(B(z))=P(T_{h}(B(h(z))))=T_{h}(P(B(h(z))))=T_{h}(\mu(h(z))),
$$
for each $h\in H$ and a.e.\ $z$ in $S$. 
By~\eqref{E:Composition}, this is equivalent to 
$$
\mu=h^*(\mu),
$$
on $S$ for each $h\in H$, i.e., $\mu$ is invariant under each $h\in H$.
%
%
%
\end{proof}

An alternative proof of Proposition~\ref{P:Tukia} uses the barycenter of the convex hull to define $\mu$ rather than the center of the smallest hyperbolic disk as above. This approach was used in~\cite{Su78}.

An application of the Measurable Riemann Mapping Theorem~\cite[Theorem~1.27]{BF14} gives that for the Beltrami form $\mu$ from Proposition~\ref{P:Tukia}, extended by 0 outside $S$,  there exists a Riemann surface $M'$ and a quasiconformal homeomorphism $\phi\: M\to M'$ such that $\mu=\mu_\phi$. 
Since conformality of a quasiconformal map $h$ of $M$ on an invariant measurable subset $S$ is equivalent to the 0 Beltrami form being invariant under $h$, we get the following corollary to Proposition~\ref{P:Tukia}.
\begin{corollary}\label{C:InvConf}
Let $M$ be a Riemann surface and $S\subseteq M$ be a measurable subset. Let $H$ be a countable group of quasiconformal maps $h$ of $M$ that leave $S$ invariant  and that have uniformly bounded dilatation on $S$. Then there exists a quasiconformal map $\phi\: M\to M'$ of $M$ onto another Riemann surface $M'$ such that the  conjugate family $H'=\{\phi\circ h\circ \phi^{-1}\: h\in H\}$ consists of quasiconformal maps of $M'$ that leave  $S'=\phi(S)$ invariant and are conformal on it.
\end{corollary}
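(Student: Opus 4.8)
The plan is to transport the invariant Beltrami form furnished by Proposition~\ref{P:Tukia} through a solution of the Beltrami equation on $M$. First I would take the invariant Beltrami form $\mu$ on $S$ from Proposition~\ref{P:Tukia}, extend it by $0$ to all of $M$, and note that $\|\mu\|_\infty<1$; the Measurable Riemann Mapping Theorem~\cite[Theorem~1.27]{BF14} then produces a Riemann surface $M'$ and a quasiconformal homeomorphism $\phi\:M\to M'$ with $\mu_\phi=\mu$. I set $S'=\phi(S)$ and, for each $h\in H$, let $h'=\phi\circ h\circ\phi^{-1}$, a quasiconformal self-map of $M'$; the conjugate family $H'=\{h'\:h\in H\}$ is again a countable group of quasiconformal maps of $M'$. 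That $h'$ leaves $S'$ invariant (up to a set of measure zero) is immediate: quasiconformal homeomorphisms send null sets to null sets, so from $h(S)=S$ modulo measure zero one gets $h'(S')=\phi(h(\phi^{-1}(S')))=\phi(h(S))=\phi(S)=S'$ up to measure zero.

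The substantive point is that each $h'$ is conformal on $S'$, i.e.\ $\mu_{h'}=0$ a.e.\ on $S'$. I would argue this in local charts using the cocycle identity~\eqref{E:Composition} for Beltrami forms. Since $h'\circ\phi=\phi\circ h$, that identity gives
$$
\phi^*(\mu_{h'})=\mu_{h'\circ\phi}=\mu_{\phi\circ h}=h^*(\mu_\phi)=h^*(\mu),
$$
and the invariance $h^*(\mu)=\mu$ on $S$ from Proposition~\ref{P:Tukia}, together with $\mu_\phi=\mu=\phi^*(0)$, yields $\phi^*(\mu_{h'})=\phi^*(0)$ a.e.\ on $S$. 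In any local chart $\phi^*$ acts pointwise by the hyperbolic isometry $T_\phi$ of $\D$ (as in the proof of Proposition~\ref{P:Tukia}), hence is injective at every point of differentiability of $\phi$ with $\phi_z\neq0$, and such points have full measure. Therefore $\mu_{h'}\circ\phi=0$ a.e.\ on $S$, that is, $\mu_{h'}=0$ a.e.\ on $\phi(S)=S'$, so $h'$ is conformal on $S'$ for every $h\in H$. This is exactly the assertion of Corollary~\ref{C:InvConf}.

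The main obstacle here is essentially bookkeeping rather than a genuine difficulty: one must keep track of the chart-dependent unit-modulus rotations in the pullback formula and carry the ``up to a set of measure zero'' qualifications consistently through the composition identity. The only input beyond Proposition~\ref{P:Tukia} is the Measurable Riemann Mapping Theorem in the version valid on Riemann surfaces, which supplies the uniformizing map $\phi$ straightening $\mu$ to the $0$ Beltrami form.
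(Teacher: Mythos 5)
Your proposal is correct and follows essentially the same route as the paper: extend the invariant Beltrami form $\mu$ from Proposition~\ref{P:Tukia} by $0$, straighten it with the Measurable Riemann Mapping Theorem to get $\phi$ with $\mu_\phi=\mu$, and observe that invariance of $\mu$ under each $h\in H$ translates, via the cocycle identity~\eqref{E:Composition}, into invariance of the $0$ Beltrami form under $\phi\circ h\circ\phi^{-1}$, i.e.\ conformality on $S'=\phi(S)$. The paper states this transport in one sentence, while you spell out the pointwise-isometry/injectivity and null-set bookkeeping, which is exactly the intended argument.
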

In particular, we have the following result.
\begin{corollary}\label{C:ConfOnS} If $M$ is a closed Riemann surface and 
 $f\in{\rm Diff}^1(M)$ permutes a dense collection of domains with bounded geometry, then  there exists a quasiconformal map $\phi\: M\to M'$ of $M$ onto another Riemann surface $M'$ such that for each $n\in\Z$, the maps $\phi\circ f^n\circ \phi^{-1}$ are all quasiconformal maps that are conformal on $S'=\phi(S)$, where $S$ is the residual  set of $f$.
\end{corollary}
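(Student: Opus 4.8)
The plan is to deduce Corollary~\ref{C:ConfOnS} directly from Lemma~\ref{L:UQC} and Corollary~\ref{C:InvConf}, with essentially no new work. First I would observe that a $\mathcal C^1$-diffeomorphism $f$ of a compact surface $M$ is quasiconformal: the differential $D_pf$ is continuous and invertible on the compact manifold $M$, so $p\mapsto K_f(p)$ is a continuous, hence bounded, function; and $f\in W^{1,2}_{\rm loc}$ because it is Lipschitz. The same reasoning applies to each power $f^n,\ n\in\Z$, so the collection $H=\{f^n\: n\in\Z\}$ is a countable group of quasiconformal self-maps of $M$.

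Next I would verify the hypotheses of Corollary~\ref{C:InvConf} for this group $H$ and the set $S$. The residual set $S=M\setminus\bigcup_{k\in\Z}D_k$ is compact (it is the invariant set from Definition~\ref{D:PWD}), hence measurable. Since $f(S)=S$ by part~(1) of Definition~\ref{D:PWD}, every element of $H$ leaves $S$ invariant. Finally, by Lemma~\ref{L:UQC} --- which is the one place where the $\mathcal C^1$-hypothesis and the bounded geometry of $\{D_k\}_{k\in\Z}$ genuinely enter --- the dilatations $K_{f^n}(p)$ are bounded above by a constant independent of $n\in\Z$ and $p\in S$, so $H$ has uniformly bounded dilatation on $S$.

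With these hypotheses in place, Corollary~\ref{C:InvConf} supplies a Riemann surface $M'$ and a quasiconformal homeomorphism $\phi\: M\to M'$ for which the conjugate family $H'=\{\phi\circ h\circ\phi^{-1}\: h\in H\}=\{\phi\circ f^n\circ\phi^{-1}\: n\in\Z\}$ consists of quasiconformal maps of $M'$ leaving $S'=\phi(S)$ invariant and conformal on $S'$. This is exactly the assertion to be proved.

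Since each step is an immediate invocation of an already-established result, I do not anticipate a real obstacle here; the only point needing a line of care is the opening remark that $f$, and hence every $f^n$, is quasiconformal, which follows from compactness of $M$ together with $f\in{\rm Diff}^1(M)$. (The substantive content --- uniform control of dilatations --- has already been absorbed into Lemma~\ref{L:UQC}.)
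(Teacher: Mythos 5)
Your proof is correct and follows exactly the paper's route: invoke Lemma~\ref{L:UQC} for the uniform dilatation bound on $S$, then apply Corollary~\ref{C:InvConf} to the cyclic group $\{f^n\}_{n\in\Z}$. The only addition is your explicit remark that a $\mathcal C^1$-diffeomorphism of a compact surface is automatically quasiconformal, which the paper leaves implicit but which is a worthwhile clarification.
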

\begin{proof}
We first apply Lemma~\ref{L:UQC} to conclude that $f^n,\ n\in\Z$, have uniformly bounded dilatations on the residual set $S$ of $f$, and then  apply  Corollary~\ref{C:InvConf} to the residual set $S$ of $f$ and the cyclic group $f^n,\ n\in\Z$.
\end{proof}
\begin{remark}\label{R:QCC}
Therefore, in what follows, we may and will assume that $f$ is a quasiconformal homeomorphism of $M$ that is conformal on its residual set $S$.
\end{remark}

\section{Transboundary modulus}\label{S:TMS}

\noindent
The  following notion of modulus is inspired by O.~Schramm's transboundary extremal length, introduced in~\cite{Sch95}. An adaptation in the setting of Sierpi\'nski carpets has been used extensively, notably in~\cite{Bo11, BM13}.

Let $M$ be a closed surface  and $\sigma$ the area measure on $M$. If $\gamma\: I\to M$ is a rectifiable curve, where $I$ is a closed interval, then $\gamma$ factors as $\gamma=\gamma_s\circ s_\gamma$, where $s_\gamma\: I\to [0,l(\gamma)]$ is the associated \emph{length function}, with $l(\gamma)$ being the length of $\gamma$, and $\gamma_s\: [0,l(\gamma)]\to M$ is the unique 1-Lipschitz continuous map, the \emph{arc length parametrization} of $\gamma$; see~\cite{Va71} for more details. For a non-negative Borel measurable function $\rho$ on $M$ and a rectifiable curve $\gamma$ as above, $\int_\gamma\rho\, ds$ is defined to be $\int_0^{l(\gamma)}\rho(\gamma_s(t)) dt$. If $\gamma$ is only locally rectifiable, we define $\int_\gamma\rho\,  ds=\sup\int_{\gamma'}\rho ds$, where the supremum is taken over all rectifiable subcurves $\gamma'$ of $\gamma$.

By a \emph{mass density} we mean a non-negative Borel measurable function $\rho$ on $M$. If $\Gamma$ is a curve family in $M$, a mass density $\rho$ is called \emph{admissible} for $\Gamma$ if
$$
\int_\gamma\rho\, ds\ge1,\quad {\rm for\ all\ locally\ rectifiable}\ \gamma\in\Gamma,
$$
where $ds$ denotes the arc-length element.
The $2$-\emph{modulus} of $\Gamma$ is defined as
$$
{\rm mod}_2(\Gamma)=\inf\left\{\int_{M} \rho^2 d\sigma\right\},
$$
where the infimum is taken over all $\rho$ that are admissible for $\Gamma$. 

Let $S\subset M$ be a compact set and $M\setminus S=\cup_{k\in \Z}D_k$, where $\{D_k\}_{k\in \Z}$ is a non-empty collection of disjoint complementary components of $S$.
If
$\Gamma$ is  a family of curves in
$M$, then we  define  the \emph{transboundary modulus} of $\Gamma$ with respect to $S$, denoted by ${\rm mod}_{S}(\Gamma)$,  as follows. Let  $\rho$ be a \emph{mass distribution for} $S$, i.e., a function $\rho\colon S\cup\{D_k\: k\in\Z\}\to[0,\infty]$, where 
the restriction $\rho|_S$ is a  non-negative Borel measurable function, and $\rho(D_k),\ k\in\Z$, is a non-negative number for each $k\in \Z$. We refer to $\rho|_{S}$ as the \emph{continuous part} of a mass distribution $\rho$ and to $\{\rho(D_k)\}_{k\in \Z}$ as the \emph{discrete part}.
A mass distribution $\rho$ is \emph{admissible} for
$\Gamma$ if  
there exists  $\Gamma_0\subseteq \Gamma,\ {\rm mod}_2(\Gamma_0)=0$, called an \emph{exceptional} curve family, such that   
\begin{equation}\label{E:Adm}
\int_{\gamma\cap S}\rho\, ds+\sum_{k\: \gamma\cap D_k\neq\emptyset}\rho(D_k)\ge1,
\end{equation}
for  each 
$\gamma\in\Gamma\setminus \Gamma_0$. Here, if $\gamma$ is rectifiable, 
$$
\int_{\gamma\cap S}\rho\, ds=\int_{[0,l(\gamma)]\cap \gamma_s^{-1}(S)}\rho(\gamma_s(t))dt,
$$
and extended as above to locally rectifiable curves. Note that non-locally rectifiable curves have 2-modulus 0, and therefore excluded.
We set 
\begin{equation}\label{E:Modulus}
{\rm mod}_{S}(\Gamma)=\inf\left\{{\rm mass}(\rho)\right\},
\end{equation}
where the infimum is taken over all mass distributions $\rho$ that 
are admissible for $\Gamma$, and 
\begin{equation}\label{E:Mass}
{\rm mass}(\rho)=\int_{S} \rho^2 d\sigma+\sum_{k\in \Z}\rho(D_k)^2
\end{equation}
is the \emph{total mass} of $\rho$.
If the set $S$ has measure zero, then we ignore the continuous part of the distribution $\rho$ in~\eqref{E:Adm} and~\eqref{E:Mass}. 
Excluding an exceptional curve family $\Ga_0$  ensures that in many interesting cases
and for some relevant  curve families $\Ga$
an admissible mass distribution exists and  $0<{\rm mod}_S(\Gamma)<\infty$. 

We think of a mass distribution $\rho$ as an element of the direct sum Hilbert space $L^2\oplus l^2$, where $L^2=L^2(S)$, and $l^2$ consists of all sequences $(\rho_k)_{k\in \Z}$ with $\sum_{k}\rho_k^2<\infty$. The norm of $\rho\in L^2\oplus l^2$ is given by
$
\|\rho\|_{L^2\oplus l^2}=\left(\|\rho\|_{L^2}^2+\|\rho\|_{l^2}^2\right)^{1/2}
$, where
$$
\|\rho\|_{L^2}=\left(\int_{S}\rho^2 d\sigma\right)^{1/2},\quad \|\rho\|_{l^2}=\left(\sum_{k\in \Z}\rho_k^2\right)^{1/2}.
$$

Transboundary modulus has some familiar properties of the $2$-mo\-du\-lus, with almost identical proofs, e.g., quasi-invariance under quasiconformal maps, monotonicity, and subadditivity; see~\cite{Va71} for classical  background. \emph{Quasi-invariance} is the property that if $h\: M\to M'$ is a quasiconformal map between two Riemann surfaces, then there exists a constant $C\ge1$ such that  for every curve family $\Gamma$ in $M$ one has
$$
\frac{1}{C}\,{\rm mod}_S(\Gamma)\le {\rm mod}_{S'}(\Gamma')\le C\, {\rm mod}_S(\Gamma),
$$
where $S'=h(S)$ and $\Gamma'=h(\Gamma)=\{h(\gamma)\: \gamma\in\Gamma\}$. In fact, if $h$ is $K$-quasiconformal, then one can choose $C=K$ above. If $h$ is conformal on $S$, the modulus is invariant, i.e., $C=1$; see Lemma~\ref{L:Confinv} below.
 \emph{Monotonicity} means that if $\Gamma'\subseteq\Gamma$, then
$$
{\rm mod}_S(\Gamma')\le{\rm mod}_S(\Gamma).
$$
\emph{Subadditivity} says that if $\Gamma=\cup_{i\in\N}\Gamma_i$, then
$$
{\rm mod}_S(\Gamma)\le\sum_{i\in\N}{\rm mod}(\Gamma_i).
$$    

The following proposition is the main result of this section.
 \begin{proposition}\label{P:Extremaldistr}
 If $\Gamma$ is a curve family with ${\rm mod}_S(\Gamma)<\infty$, and if
 $\{D_k\}_{k\in \Z}$ have bounded geometry, then an extremal distribution $\rho$, i.e., an admissible distribution for which the infimum in the definition~\eqref{E:Modulus} is achieved, exists and is unique as an element of $L^2\oplus l^2$.
\end{proposition}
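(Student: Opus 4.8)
The plan is to run the standard Hilbert-space argument for existence of extremal metrics, adapted to the direct sum $L^2\oplus l^2$. First I would observe that the set $\mathcal{A}$ of admissible mass distributions for $\Gamma$ is a convex subset of $L^2\oplus l^2$: if $\rho_1,\rho_2$ are admissible with exceptional families $\Gamma_0^{(1)},\Gamma_0^{(2)}$, then for $t\in[0,1]$ the distribution $t\rho_1+(1-t)\rho_2$ satisfies the admissibility inequality~\eqref{E:Adm} for every $\gamma\in\Gamma\setminus(\Gamma_0^{(1)}\cup\Gamma_0^{(2)})$, and $\Gamma_0^{(1)}\cup\Gamma_0^{(2)}$ still has $2$-modulus zero by subadditivity of $\mathrm{mod}_2$. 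Since $\mathrm{mod}_S(\Gamma)=\inf\{\|\rho\|_{L^2\oplus l^2}^2:\rho\in\mathcal{A}\}<\infty$, I can take a minimizing sequence $(\rho_j)$ with $\|\rho_j\|^2\to\mathrm{mod}_S(\Gamma)$. By convexity of $\mathcal{A}$ and the parallelogram law in the Hilbert space $L^2\oplus l^2$ (applied to $\rho_i,\rho_j$, using that $\tfrac12(\rho_i+\rho_j)\in\mathcal{A}$ so $\|\tfrac12(\rho_i+\rho_j)\|^2\ge\mathrm{mod}_S(\Gamma)$), the sequence $(\rho_j)$ is Cauchy in $L^2\oplus l^2$ and hence converges to some $\rho\in L^2\oplus l^2$ with $\|\rho\|^2=\mathrm{mod}_S(\Gamma)$.

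The main point is then to show that this limit $\rho$ is itself admissible, i.e. that $\mathcal{A}$ is closed in $L^2\oplus l^2$; uniqueness is immediate afterwards, since if $\rho,\rho'$ were two extremal distributions then $\tfrac12(\rho+\rho')\in\mathcal{A}$ has norm-squared $\ge\mathrm{mod}_S(\Gamma)$, which together with the parallelogram law forces $\rho=\rho'$ in $L^2\oplus l^2$. For closedness I would pass to a subsequence along which $\rho_j\to\rho$ pointwise a.e.\ on $S$ and coordinatewise in the $l^2$-part, and also (after a further subsequence, using that the $L^2$-norms of the differences go to zero) arrange $\sum_j\|\rho_{j+1}-\rho_j\|_{L^2}<\infty$, so that $g:=|\rho_1|+\sum_j|\rho_{j+1}-\rho_j|\in L^2(S)$ dominates every $\rho_j$ on $S$. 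The subtlety is that each $\rho_j$ is admissible only off an exceptional family $\Gamma_0^{(j)}$, and $\bigcup_j\Gamma_0^{(j)}$ again has $2$-modulus zero; so off this countable union, for every locally rectifiable $\gamma\in\Gamma$ we have $\int_{\gamma\cap S}\rho_j\,ds+\sum_{k:\gamma\cap D_k\neq\emptyset}\rho_j(D_k)\ge1$ for all $j$. On the $l^2$-part, coordinatewise convergence gives $\liminf_j\rho_j(D_k)\ge$ ... in fact $\rho_j(D_k)\to\rho(D_k)$, so $\sum_{k:\gamma\cap D_k\neq\emptyset}\rho_j(D_k)\to\sum_{k:\gamma\cap D_k\neq\emptyset}\rho(D_k)$ by Fatou applied to counting measure. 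On the continuous part, for a \emph{rectifiable} $\gamma$ I use the dominated convergence theorem: $\rho_j(\gamma_s(t))\to\rho(\gamma_s(t))$ for a.e.\ $t$ and $|\rho_j(\gamma_s(t))|\le g(\gamma_s(t))$ with $\int_{[0,l(\gamma)]}g(\gamma_s(t))\,dt<\infty$ — and here is where I must be careful: pointwise-a.e.\ convergence on $S$ and $L^2(S,\sigma)$-domination do not automatically transfer to a.e.\ convergence along an arbitrary curve. This is handled exactly as in the classical theory (see~\cite{Va71}): the set of rectifiable curves $\gamma$ for which either $\int_\gamma g\,ds=\infty$, or $\gamma_s^{-1}$ of the exceptional null set where $\rho_j\not\to\rho$ has positive length, or $\gamma$ passes through the set where the dominating function fails to be integrated correctly, is a curve family of $2$-modulus zero; I enlarge $\Gamma_0:=\bigcup_j\Gamma_0^{(j)}$ by this family, still of $2$-modulus zero by subadditivity. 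For the remaining $\gamma\in\Gamma\setminus\Gamma_0$ dominated convergence applies and passing to the limit in~\eqref{E:Adm} yields admissibility of $\rho$ with exceptional family $\Gamma_0$.

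The bounded-geometry hypothesis enters precisely to guarantee that this classical "Fuglede-type" reduction is legitimate, i.e.\ that the constructed bad curve families genuinely have $2$-modulus zero: because $\{D_k\}$ have bounded geometry, $S=M\setminus\bigcup_k D_k$ is a compact set on the closed surface $M$ with $\sigma(M)<\infty$, the dominating function $g$ lies in $L^2(M)$, and standard arguments show $\{\gamma:\int_\gamma g\,ds=\infty\}$ has $2$-modulus zero, as does the family of curves meeting a fixed $\sigma$-null subset of $M$ in a set of positive length. I expect the main obstacle to be organizing this last step cleanly — making sure the exceptional families are uniformly controlled and that the interchange of limit and line integral along curves is justified for all but a $2$-modulus-zero family — rather than the Hilbert-space extraction of the limit, which is entirely routine. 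Once admissibility and the norm identity $\|\rho\|^2=\mathrm{mod}_S(\Gamma)$ are in hand, existence is proved, and uniqueness follows from strict convexity of the squared norm on the convex set $\mathcal{A}$ as indicated above.
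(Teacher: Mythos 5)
Your extraction of a strongly convergent minimizing sequence via convexity of the admissible set and the parallelogram law is a valid alternative to the paper's route through the Banach--Alaoglu theorem followed by Mazur's lemma; both methods produce the same candidate $\rho\in L^2\oplus l^2$, and your uniqueness argument from strict convexity is the paper's argument verbatim.

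The genuine gap is in passing to the limit in the admissibility inequality, specifically in the discrete part. You assert that coordinatewise convergence $\rho_j(D_k)\to\rho(D_k)$ plus ``Fatou applied to counting measure'' gives $\sum_{k:\gamma\cap D_k\neq\emptyset}\rho_j(D_k)\to\sum_{k:\gamma\cap D_k\neq\emptyset}\rho(D_k)$. This is wrong on two counts. Fatou yields only a one-sided inequality, and in the unhelpful direction: writing $I=\{k:\gamma\cap D_k\neq\emptyset\}$, Fatou gives $\sum_{k\in I}\rho(D_k)\le\liminf_j\sum_{k\in I}\rho_j(D_k)$, whereas to carry $\int_{\gamma\cap S}\rho_j\,ds+\sum_{k\in I}\rho_j(D_k)\ge1$ over to $\rho$ one must control the discrepancy $\sum_{k\in I}|\rho_j(D_k)-\rho(D_k)|$ from above. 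Moreover, strong $l^2$-convergence does not control such $l^1$-sums over an arbitrary, possibly infinite, index set $I$: take $\rho_j(D_k)-\rho(D_k)=j^{-3/2}$ for $1\le k\le j^2$ and $0$ otherwise; the $l^2$-norm is $j^{-1/2}\to0$ while the $l^1$-sum is $j^{1/2}\to\infty$. So no Fatou or dominated-convergence argument carried out purely in $l^2$ can rescue the discrete sum, and the discrete part cannot be handled separately from the continuous one.

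The paper's resolution, which is the key idea missing from your outline, is to treat the two parts jointly via the transboundary modulus. It isolates a curve family $\Gamma'\subseteq\Gamma$ outside of which the combined quantity $\int_{\gamma\cap S}|\tilde\rho_K-\rho|\,ds+\sum_{k:\gamma\cap D_k\neq\emptyset}|\tilde\rho_K(D_k)-\rho(D_k)|$ tends to zero, proves ${\rm mod}_S(\Gamma')=0$ by a transboundary version of Fuglede's argument (auxiliary admissible mass distributions built from a rapidly convergent subsequence), and only then deduces ${\rm mod}_2(\Gamma')=0$ from Corollary~\ref{C:Modcomp0}. This last step is the one place where bounded geometry enters; it is not needed for the reasons you cite (compactness of $S$, $L^2$-domination by $g$, curves through a null set having modulus zero), none of which actually require bounded geometry. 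Without routing the exceptional family for the discrete part through ${\rm mod}_S$ and Corollary~\ref{C:Modcomp0}, you have no way to certify that its $2$-modulus is zero, which is what the definition of admissibility demands.
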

The proof of this proposition below mimics that of~\cite[Proposition~2.4]{BM13} and requires auxiliary results. 

\subsection{Auxiliary lemmas}
As above, let $M$ be a closed surface and $\sigma$ its area measure. 
To prove Proposition~\ref{P:Extremaldistr} we need Corollary~\ref{C:Modcomp0} below, 
which, in turn, is derived from the following two results.

\begin{lemma}\cite{Bo88}, \cite[Lemma~2.2]{BM13}\label{L:Boj}
Let $\{B_k\}_{k\in I}$ be a finite or countable collection of disjoint geodesic disks in a closed surface $M$. Further,  let $\{a_k\}_{k\in I}$ be non-negative numbers. Then, for each $\lambda\ge1$ there exists $C\ge0$ such that
\begin{equation}\label{E:Boj}
\left\|\sum_{k\in I}a_k\chi_{\lambda B_k}\right\|_{L^2}\leq C \left\|\sum_{k\in I}a_k\chi_{B_k}\right\|_{L^2},
\end{equation}
where $\lambda B_k$ denotes a disk concentric with $B_k$ and whose radius is $\lambda$ times that of $B_k$.
\end{lemma}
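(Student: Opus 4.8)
The plan is to deduce the inequality from the $L^2$-boundedness of the (centered) Hardy--Littlewood maximal operator $\mathcal M$ on the compact surface $M$, via a standard duality argument; disjointness of the $B_k$ is not actually needed, and since the constant $C$ produced will depend only on $\lambda$ and $M$, by monotone convergence it suffices to treat a finite index set $I$ and afterwards let $I$ exhaust $\Z$. Write $B_k=B(p_k,r_k)$, so that $\lambda B_k=B(p_k,\lambda r_k)$, and recall that, being a compact Riemannian manifold, $M$ carries a doubling measure: there is $C_D\ge1$ with $\sigma(B(x,2t))\le C_D\,\sigma(B(x,t))$ for all $x\in M$ and $t>0$, whence, iterating, $\sigma(B(x,\mu t))\le C(\mu)\,\sigma(B(x,t))$ for every fixed $\mu\ge1$. (If some $r_k$ exceeds $\diam M$, then $\lambda B_k=B_k=M$ and that term contributes nothing to either side, so we discard it.)

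First I would establish, for every non-negative $g\in L^2(M)$ and every $k$, the comparison
$$
\int_{\lambda B_k} g\, d\sigma\le C(\lambda,M)\int_{B_k}\mathcal M g\, d\sigma .
$$
Indeed, for $x\in B_k$ we have $\lambda B_k\subseteq B\bigl(x,(\lambda+1)r_k\bigr)$, and the doubling property yields $\sigma\bigl(B(x,(\lambda+1)r_k)\bigr)\le C'(\lambda)\,\sigma(B_k)$ (iterate the doubling bound $O(\log\lambda)$ times, using in addition $B(x,r_k)\subseteq B(p_k,2r_k)$); therefore
$$
\int_{\lambda B_k} g\, d\sigma\le \sigma\bigl(B(x,(\lambda+1)r_k)\bigr)\,\mathcal M g(x)\le C'(\lambda)\,\sigma(B_k)\,\mathcal M g(x)\qquad(x\in B_k).
$$
Integrating this in $x$ over $B_k$ and dividing by $\sigma(B_k)$ gives the displayed comparison with $C(\lambda,M)=C'(\lambda)$.

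Next I would run the duality. With $I$ finite, $F:=\sum_{k\in I}a_k\chi_{\lambda B_k}$ is bounded, and $\|F\|_{L^2}=\sup\bigl\{\int_M Fg\, d\sigma: g\ge0,\ \|g\|_{L^2}\le1\bigr\}$. For such a $g$,
$$
\int_M Fg\, d\sigma=\sum_{k\in I}a_k\int_{\lambda B_k} g\, d\sigma\le C(\lambda,M)\sum_{k\in I}a_k\int_{B_k}\mathcal M g\, d\sigma=C(\lambda,M)\int_M\Bigl(\sum_{k\in I}a_k\chi_{B_k}\Bigr)\mathcal M g\, d\sigma ,
$$
the interchange of sum and integral being legitimate by non-negativity. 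By the Cauchy--Schwarz inequality followed by the Hardy--Littlewood maximal theorem $\|\mathcal M g\|_{L^2(M)}\le C_{\mathrm{HL}}\|g\|_{L^2(M)}$ (valid since $\sigma$ is doubling), the right-hand side is at most $C(\lambda,M)\,C_{\mathrm{HL}}\,\bigl\|\sum_{k\in I}a_k\chi_{B_k}\bigr\|_{L^2}$. Taking the supremum over $g$ proves the estimate for finite $I$ with $C=C(\lambda,M)\,C_{\mathrm{HL}}$, and letting $I\uparrow\Z$ (monotone convergence on both sides) gives the countable case; in particular, finiteness of the right-hand side of \eqref{E:Boj} forces finiteness of its left-hand side.

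Regarding difficulty, there is no genuinely hard step: the argument is entirely real-variable and classical (cf.\ \cite{Bo88}). The only places needing a little attention are the geometric preliminaries on a closed surface — volume doubling for geodesic balls of arbitrary radius, and the degenerate case of a geodesic disk equal to all of $M$ — and the observation that it is essential that $\lambda$ be fixed, so that $C(\lambda,M)<\infty$. As an alternative one could invoke the Fefferman--Stein vector-valued maximal inequality, but the duality argument above keeps the exposition self-contained.
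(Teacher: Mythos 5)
Your proof is correct and follows essentially the route the paper intends: the paper gives no proof of its own, citing \cite{Bo88} and \cite[Lemma~2.2]{BM13} and remarking that the argument extends to closed surfaces via a maximal-function argument, and your duality-plus-Hardy--Littlewood proof on the doubling space $(M,\sigma)$ is exactly that classical Bojarski argument (centered versus uncentered maximal function is immaterial here). One cosmetic point: when $\lambda B_k=B_k=M$ the term contributes \emph{equally} to both sides rather than nothing, and in fact no discarding is needed since $\int_{\lambda B_k}g\,d\sigma=\int_{B_k}g\,d\sigma\le\int_{B_k}\mathcal{M}g\,d\sigma$ makes your key comparison trivial for such $k$.
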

\begin{remark}
In the above cited sources the lemma is proved in the planar or spherical settings, but  it extends to arbitrary closed surfaces with an almost identical proof using an uncentered maximal function.
\end{remark}

\begin{lemma}\label{L:Modcomp}
Let $M$ be a closed surface and $S\subset M$ be a compact subset such that $S=M\setminus\cup_{k\in\Z} D_k$, where the domains $D_k,\ k\in \Z$,  have bounded geometry. If $\delta>0$, then there exists a constant $C>0$ such that for any curve family $\Gamma$  in  $M$ consisting of curves $\gamma$ with ${\rm diam}(\gamma)\ge\delta$,  we have
$$
{\rm mod}_{2}(\Gamma)\le C\cdot{\rm mod}_{S}(\Gamma).
$$ 
\end{lemma}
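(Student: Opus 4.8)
The plan is to compare the two moduli by converting an admissible mass distribution $\rho$ for the transboundary modulus into an admissible mass density for the ordinary $2$-modulus, at the cost of a controlled increase in total mass. Given a mass distribution $\rho = (\rho|_S, (\rho(D_k))_{k\in\Z})$ admissible for $\Gamma$, with exceptional family $\Gamma_0$ of $2$-modulus zero, I would first choose $\lambda \ge 1$ depending only on the bounded geometry constant $C$ and on $\delta$. The idea is that any curve $\gamma \in \Gamma$ with $\diam(\gamma) \ge \delta$ that enters a domain $D_k$ with $\diam(D_k)$ small compared to $\delta$ must in fact cross a full ``collar'' around $D_k$: more precisely, writing $B(p_k, r_k) \subseteq D_k \subseteq B(p_k, R_k)$ with $R_k/r_k \le C$, if $R_k$ is small enough then $\gamma$ must exit the ball $B(p_k, \lambda R_k)$, hence $\gamma$ contains a subarc inside $B(p_k,\lambda R_k) \setminus B(p_k, R_k) \subseteq M \setminus D_k = S$ (for $k$ with $R_k$ small) of Euclidean length at least, say, $(\lambda-1)R_k \ge (\lambda-1) r_k$. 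Only finitely many $D_k$ have $\diam(D_k)$ not small relative to $\delta$, and those can be absorbed into the constant.

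The key step is then to define the comparison density. Set
$$
\tilde\rho = \rho|_S \cdot \chi_S + \sum_{k\in\Z} \frac{\rho(D_k)}{(\lambda-1) r_k}\, \chi_{\lambda B_k},
$$
where $B_k = B(p_k, R_k)$, so $\lambda B_k = B(p_k, \lambda R_k)$. For a curve $\gamma \in \Gamma \setminus \Gamma_0$ with $\diam(\gamma) \ge \delta$, the portion of $\gamma$ in $S$ contributes $\int_{\gamma \cap S}\rho\,ds$, while for each $k$ with $\gamma \cap D_k \ne \emptyset$ (and $R_k$ small) the subarc of $\gamma$ inside $\lambda B_k \cap S$ has length $\ge (\lambda-1) r_k$, so it contributes at least $\rho(D_k)$ to $\int_\gamma \tilde\rho\, ds$. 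Summing, $\int_\gamma \tilde\rho\, ds \ge \int_{\gamma\cap S}\rho\,ds + \sum_{k:\gamma\cap D_k\ne\emptyset}\rho(D_k) \ge 1$, so (after rescaling $\tilde\rho$ by a fixed constant to handle the finitely many large $D_k$) a fixed multiple of $\tilde\rho$ is admissible for $\Gamma \setminus \Gamma_0$, hence for $\Gamma$ up to the $2$-null family $\Gamma_0$, which does not affect ${\rm mod}_2$.

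It remains to bound $\|\tilde\rho\|_{L^2}^2$. By the triangle inequality in $L^2$ and $(a+b)^2 \le 2a^2+2b^2$,
$$
\|\tilde\rho\|_{L^2}^2 \le 2\|\rho|_S\|_{L^2}^2 + 2\Bigl\|\sum_k \frac{\rho(D_k)}{(\lambda-1)r_k}\chi_{\lambda B_k}\Bigr\|_{L^2}^2.
$$
Applying Lemma~\ref{L:Boj} with $a_k = \rho(D_k)/((\lambda-1) r_k)$ to pass from $\chi_{\lambda B_k}$ to $\chi_{B_k}$, and then using that $\|\chi_{B_k}\|_{L^2}^2 = \sigma(B_k) \asymp r_k^2$ with comparability constants depending only on $C$ (the disks $B_k$ being uniformly small, geodesic disks on $M$ have area comparable to the square of the radius), I get
$$
\Bigl\|\sum_k a_k \chi_{\lambda B_k}\Bigr\|_{L^2}^2 \lesssim \Bigl\|\sum_k a_k \chi_{B_k}\Bigr\|_{L^2}^2 = \sum_k a_k^2 \sigma(B_k) \lesssim \sum_k \frac{\rho(D_k)^2}{r_k^2}\cdot r_k^2 = \sum_k \rho(D_k)^2,
$$
where the disjointness of the $B_k$ is used in the middle equality. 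Altogether $\|\tilde\rho\|_{L^2}^2 \lesssim \|\rho|_S\|_{L^2}^2 + \sum_k \rho(D_k)^2 = {\rm mass}(\rho)$, with constant depending only on $C$, $\delta$, and $M$. Taking the infimum over admissible $\rho$ yields ${\rm mod}_2(\Gamma) \le C' {\rm mod}_S(\Gamma)$.

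The main obstacle I anticipate is the geometric claim that a curve of diameter $\ge \delta$ meeting $D_k$ must traverse the annulus $\lambda B_k \setminus B_k$ within $S$: one must be careful that $B(p_k,\lambda R_k) \setminus D_k$ actually lies in $S$, which requires $\lambda R_k$ small enough that $B(p_k, \lambda R_k)$ meets no other $D_{k'}$ — but this can fail, so the correct statement is only that $\gamma$ has a subarc in $B(p_k,\lambda R_k)\setminus B(p_k, R_k)$, part of which may lie in other domains $D_{k'}$; handling this cleanly (e.g. by noting that such a subarc either stays in $S$ long enough, or enters some $D_{k'}$ and the argument recurses, or simply by a more careful accounting that every unit of $\tilde\rho$-length is charged at most boundedly often) is the delicate point and mirrors the corresponding estimate in~\cite[Proposition~2.4]{BM13} and~\cite{Bo11}. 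The finitely many exceptional large domains and the reduction to uniformly small $B_k$ are routine.
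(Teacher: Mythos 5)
Your approach is essentially the paper's: from an admissible mass distribution $\rho$ build a density $\tilde\rho = \rho\,\chi_S + \sum_k a_k\chi_{\lambda B_k}$ on $M$, verify admissibility via the annulus-crossing estimate for curves of diameter $\ge\delta$, and control $\|\tilde\rho\|_{L^2}$ with Lemma~\ref{L:Boj}. However, the ``delicate point'' you flag at the end is a red herring, and the fact that you cannot resolve it indicates a misreading of your own construction. In the admissibility check $\int_\gamma\tilde\rho\,ds\ge 1$, the contribution of the $k$-th bump is $a_k\int_\gamma\chi_{\lambda B_k}\,ds$, i.e., $a_k$ times the \emph{total} length of $\gamma$ inside the ball $\lambda B_k$ --- with no restriction to $S$ whatsoever. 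So the quantity to bound from below is never ``the length of the arc through $\lambda B_k\cap S$.'' A curve $\gamma$ with $\gamma\cap D_k\neq\emptyset$, hence entering $B(p_k,R_k)$, and ${\rm diam}(\gamma)\ge\delta$, either (i) leaves $\lambda B_k$, in which case it crosses the full annulus $\lambda B_k\setminus B(p_k,R_k)$ and so $\int_\gamma\chi_{\lambda B_k}\,ds\ge(\lambda-1)R_k\ge(\lambda-1)r_k$, regardless of whether the crossing subarc meets other $D_{k'}$, or (ii) lies entirely inside $\lambda B_k$, in which case $\int_\gamma\chi_{\lambda B_k}\,ds\ge{\rm diam}(\gamma)\ge\delta$. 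The paper takes $\lambda B_k=B(p_k,2R_k)$, combines (i) and (ii) into the single bound
$$
\frac{\rho(D_k)}{r_k}\int_\gamma\chi_{B(p_k,2R_k)}\,ds\ \ge\ \min\left\{\frac{\delta}{r},1\right\}\rho(D_k),\qquad r=\sup_k r_k,
$$
and concludes that $\tilde\rho/\min\{\delta/r,1\}$ is admissible; this also handles all $k$ uniformly, dispensing with your separate discussion of ``finitely many large domains.'' Once you drop the false inclusion $B(p_k,\lambda R_k)\setminus B(p_k,R_k)\subseteq S$ and the accompanying worry, your argument closes and coincides with the paper's. The remainder of your write-up --- the application of Lemma~\ref{L:Boj} to pass from $\chi_{\lambda B_k}$ to $\chi_{B_k}$, disjointness of the $B(p_k,r_k)$, and $\sigma(B(p_k,r_k))\lesssim r_k^2$ --- matches the paper's mass estimate.
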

\begin{proof}
We assume that ${\rm mod}_{S}(\Gamma)$ is finite for otherwise there is nothing to prove.
Let $\epsilon>0$ be arbitrary.
There exists a mass distribution $\rho$ with ${\rm mass}(\rho)<{\rm mod}_{S}(\Gamma)+\epsilon$, and an exceptional curve family $\Gamma_0\subseteq\Gamma$ such that
$$
\int_{\gamma\cap S}\rho\, ds+\sum_{k\:\gamma\cap D_k\neq\emptyset}\rho(D_k)\ge1,
$$ 
for  all $\gamma\in\Gamma\setminus\Gamma_0$.


Recall, the bounded geometry assumption is, for each $k\in\Z$, the existence of disks $B(p_k, r_k), B(p_k, R_k)$ such that 
$$
B(p_k, r_k)\subseteq D_k\subseteq B(p_k, R_k),\quad {\rm with}\quad {R_k}/{r_k}\leq C.
$$
Let $r=\sup\{r_{k}\}>0$, a constant that depends on $S$.
We define a mass density $\tilde\rho$ on $M$  by the formula
$$
\tilde\rho=\rho+\sum_{k\in \Z}\frac{\rho(D_k)}{r_k}\chi_{B(p_k,2R_k)},
$$
where the first summand is extended  to the complement of $S$ as 0. Let $\gamma\in\Gamma\setminus\Gamma_0$ be an arbitrary locally rectifible curve. If $\gamma$ intersects $D_k,\ k\in\Z$, and is completely contained in $B(p_k, 2R_k)$, then 
$$
\int_\gamma\chi_{B(p_k,2R_k)}ds\ge \delta.
$$
Otherwise, i.e., if $\gamma$ is not completely contained in $B(p_k,2R_k)$, it  intersects both complementary components of $B(p_k,2R_k)\setminus \overline{B(p_k, R_k)}$, and thus 
$$
\int_\gamma\chi_{B(p_k,2R_k)}ds\ge R_k.
$$
Therefore,
$$
\int_\gamma\tilde\rho\, ds\ge \int_{\gamma\cap S}\rho\, ds+\min\left\{\frac{\delta}{r},1\right\}\cdot\sum_{k\:\gamma\cap D_k\neq\emptyset}\rho(D_k)\ge\min\left\{\frac{\delta}{r}, 1\right\},
$$
i.e., $\tilde\rho/\min\left\{\frac{\delta}{r}, 1\right\}$ is admissible for $\Gamma\setminus\Gamma_0$.
Also, using Lemma~\ref{L:Boj} we conclude
\begin{equation}\label{E:finitemass}
\begin{aligned}
\int_{M}\tilde\rho^2d\sigma&\leq 2\left(\int_S \rho^2 d\sigma+C^2\sum_{k\in \Z}\frac{\rho(D_k)^2}{r_k^2}\sigma\left(B(p_k, r_k)\right)\right)\\&\leq C_1\cdot{\rm mass}(\rho)<C_1\left({\rm mod}_S(\Gamma)+\epsilon\right),
\end{aligned}
\end{equation}
where $C_1$ is a constant that depends on the surface $M$ and the set $S$. Thus, 
$$
\begin{aligned}
{\rm mod}_2(\Gamma)&\le{\rm mod}_2(\Gamma\setminus\Gamma_{0})+{\rm mod}_2(\Gamma_0)\\ &={\rm mod}_2(\Gamma\setminus\Gamma_{0})\le C_2\left({\rm mod}_S(\Gamma)+\epsilon\right),
\end{aligned}
$$
for $C_2=C_1/\left(\min\left\{\frac{\delta}{r}, 1\right\}\right)^2$.
Since $\epsilon$ is arbitrary, we conclude that 
$$
{\rm mod}_2(\Gamma)\le C_2\cdot {\rm mod}_S(\Gamma).$$ 
\end{proof}
\begin{remark}
The inequality in the previous lemma fails without the lower bound assumption on diameters. Indeed, it is elementary to produce a fa\-mi\-ly of curves $\Gamma$, all contained in one complementary domain $D_k$, such that  ${\rm mod}_2(\Gamma)=\infty$. However, for such family one would have ${\rm mod}_S(\Gamma)=1$.
\end{remark}

\begin{corollary}\label{C:Modcomp0}
Let $M$ be a closed surface and $S\subset M$ be a compact subset such that $S=M\setminus\cup_{k\in\Z} D_k$, where the domains $D_k,\ k\in \Z$,  have bounded geometry. If $\Gamma$  is a curve family in  $M$ that does not contain constant curves, then ${\rm mod}_{S}(\Gamma)=0$ implies ${\rm mod}_{2}(\Gamma)=0$.
\end{corollary}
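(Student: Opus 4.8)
The plan is to reduce to Lemma~\ref{L:Modcomp} by a standard decomposition of $\Gamma$ according to the diameters of its curves. First I would write $\Gamma = \bigcup_{n\in\N}\Gamma_n$, where $\Gamma_n=\{\gamma\in\Gamma : \operatorname{diam}(\gamma)\ge 1/n\}$. Since no curve in $\Gamma$ is constant, every $\gamma\in\Gamma$ has positive diameter, so indeed $\Gamma=\bigcup_n\Gamma_n$; note the $\Gamma_n$ are nested increasing, but subadditivity is all that is needed. By monotonicity, $\operatorname{mod}_S(\Gamma_n)\le\operatorname{mod}_S(\Gamma)=0$ for every $n$, so each $\Gamma_n$ has vanishing transboundary modulus.

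Next I would apply Lemma~\ref{L:Modcomp} with $\delta=1/n$ to the family $\Gamma_n$: there is a constant $C_n>0$ (depending on $M$, $S$, and $n$) with $\operatorname{mod}_2(\Gamma_n)\le C_n\cdot\operatorname{mod}_S(\Gamma_n)=0$. Hence $\operatorname{mod}_2(\Gamma_n)=0$ for every $n\in\N$. Finally, by countable subadditivity of the $2$-modulus,
$$
\operatorname{mod}_2(\Gamma)\le\sum_{n\in\N}\operatorname{mod}_2(\Gamma_n)=0,
$$
which gives $\operatorname{mod}_2(\Gamma)=0$, as claimed.

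There is essentially no obstacle here: the only point requiring a moment's care is that the exhaustion $\Gamma=\bigcup_n\Gamma_n$ genuinely uses the hypothesis that $\Gamma$ contains no constant curves (a constant curve has diameter $0$ and would lie in none of the $\Gamma_n$), and that the constant $C_n$ from Lemma~\ref{L:Modcomp} is allowed to blow up as $n\to\infty$ — this is harmless because we only need each individual $\operatorname{mod}_2(\Gamma_n)$ to vanish before summing. Everything else is a direct invocation of monotonicity, Lemma~\ref{L:Modcomp}, and subadditivity.
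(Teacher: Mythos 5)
Your proposal is correct and follows essentially the same route as the paper: decompose $\Gamma$ into the nested families $\Gamma_n=\{\gamma\in\Gamma:\operatorname{diam}(\gamma)\ge 1/n\}$, apply monotonicity of $\operatorname{mod}_S$ and Lemma~\ref{L:Modcomp} to each $\Gamma_n$, and then conclude by countable subadditivity of $\operatorname{mod}_2$. Your added remark that the $n$-dependence of the constant $C_n$ is harmless is a sensible clarification but not a departure from the argument.
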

\begin{proof}
Since $\Gamma$ does not contain constant curves, we have
$$
\Gamma=\cup_{i\in\N}\Gamma_i,\quad{\rm where}\quad \Gamma_i=\{\gamma\in\Gamma:\ {\rm diam}(\gamma)\ge1/i\}.
$$
Monotonicity of ${\rm mod}_S$ gives that  ${\rm mod}_S(\Gamma_i)=0$. 
Lemma~\ref{C:Modcomp0} then gives ${\rm mod}_2(\Gamma_i)=0$, and the
subadditivity of ${\rm mod}_2$
implies ${\rm mod}_2(\Gamma)=0$.
\end{proof}

\subsection{Proof of Proposition~\ref{P:Extremaldistr}} Let $(\rho_i)_{i\in\N}$ be a minimizing sequence of admissible mass distributions for $\Gamma$, i.e.,
$$
{\rm mass}(\rho_i)\to{\rm mod}_S(\Gamma),\quad i\to\infty.
$$
The condition ${\rm mod}_S(\Gamma)<\infty$ allows us to assume that ${\rm mass}(\rho_i)\le C$ for some constant $C>0$ and all $i\in\N$. 
By the Banach--Alaoglu Theorem, there exists a subsequence of $(\rho_i)_{i\in\N}$ that weakly converges to $\rho\in L^2\oplus l^2$.
In particular,  we may assume that the limits
$$
\rho(D_k)=\lim_{i\to\infty}\rho_i(D_k)
$$
exist for all $k\in \Z$. For the continuous part of the mass distribution we choose a non-negative Borel measurable representative of the $L^2$-summand of $\rho$. We claim that such a representative along with $\rho(D_k),\ k\in\Z$, is an extremal mass distribution. 

One inequality, namely ${\rm mass}(\rho)\leq{\rm mod}_S(\Gamma)$ follows from the weak lower semicontinuity of norms.
To show the reverse inequality, one needs to demonstrate that $\rho$ is admissible for $\Gamma$.
Since the sequence $(\rho_i)_{i\in\N}$ converges weakly to $\rho\in L^2\oplus l^2$, Mazur's Lemma~\cite[Th.\ 2, p.\ 120]{Yo80} gives that there is a sequence $(\tilde\rho_K)_{K\in\N}$ of convex combinations,
$$
\tilde\rho_K=\sum_{i=1}^K\lambda_{K,i}\rho_i,
$$
that strongly converges to $\rho$ in $L^2\oplus l^2$. 
Since each $\tilde\rho_K,\ K\in\N$, is trivially admissible for $\Gamma$, the sequence $(\tilde\rho_K)_{K\in\N}$ is also a minimizing sequence for  ${\rm mod}_S(\Gamma)$.  
An exceptional curve family $\widetilde\Gamma_K$ for $\tilde\rho_K$ is the union of exceptional curve families for $\rho_i,\ i=1,2,\dots, K$.
 By possibly passing to a subsequence, we may assume that
 $$
 \left\|\tilde\rho_K-\rho\right\|_{L^2\oplus l^2}\le\frac{1}{2^K},\quad K\in\N. 
 $$
 Let 
 $$
 \Gamma_K'=\left\{\gamma\in\Gamma\: \int_{\gamma\cap S}|\tilde\rho_K-\rho|\, ds+\sum_{k\: \gamma\cap D_k\neq\emptyset}|\tilde\rho_K(D_k)-\rho(D_k)|\ge\frac{1}{K}\right\},
 $$
 and 
 $$
 \Gamma'=\cap_{i\in\N}\cup_{K\ge i}\Gamma'_K.
 $$
From this definition we have that if $\gamma\in\Gamma\setminus\Gamma'$, then
 $$
 \lim_{K\to\infty}\left(\int_{\gamma\cap S}|\tilde\rho_K-\rho|\, ds+\sum_{k\: \gamma\cap D_k\neq\emptyset}|\tilde\rho_K(D_k)-\rho(D_k)|\right)=0.
 $$
 The mass distributions
 $$
 \rho_i'=\sum_{K=i}^\infty K|\tilde\rho_K-\rho|,\quad i\in\N,
 $$
 are admissible for $\Gamma'$, and since $\left\|\tilde\rho_K-\rho\right\|_{L^2\oplus l^2}\le{1}/{2^K}$, we have that 
 $${\rm mass}(\rho_i')\to0,\ i\to\infty.
 $$ Thus, ${\rm mod}_S(\Gamma')=0$. Corollary~\ref{C:Modcomp0} then gives ${\rm mod}_2(\Gamma')=0$. We conclude that $\Gamma_\infty=\Gamma'\cup\left(\cup_{K\in\N}\widetilde\Gamma_K\right)$ is an exceptional family for $\rho$. Indeed, subadditivity gives  ${\rm mod}_2(\Gamma_\infty)=0$, and for each $\gamma\in\Gamma\setminus\Gamma_\infty$, since each $\tilde\rho_K$ is admissible,
 $$
 \begin{aligned}
 &\int_{\gamma\cap S}\rho\, ds+\sum_{k\:\gamma\cap D_k\neq\emptyset}\rho(D_k)\\
 &\ge1-\lim_{K\to\infty}\left(\int_{\gamma\cap S}|\tilde\rho_K-\rho|\, ds+\sum_{k\: \gamma\cap D_k\neq\emptyset}|\tilde\rho_K(D_k)-\rho(D_k)|\right)=1.
 \end{aligned}
 $$
This completes the proof of existence of an extremal mass distribution for ${\rm mod}_S(\Gamma)$. 

A simple convexity argument implies that an extremal distribution $\rho$ is also unique. Indeed, if there exists another, different, extremal mass distribution $\rho'$, then
 the average $(\rho+\rho')/2$ is also admissible for $\Gamma$. The strict convexity of $L^2\oplus l^2$, now gives that the average has a strictly smaller total mass, a contradiction. 
\qed

\smallskip

To finish this section, we prove the following conformal invariance  lemma, Lemma~\ref{L:Confinv} below, that makes transboundary modulus particularly useful. Let $f\: M\to M'$ be a homeomorphism between Riemann surfaces that
takes a compact set $S=M\setminus\cup_{k\in\Z} D_k$ onto $S'=M'\setminus\cup_{k\in\Z}D_k'$.
Assume that $f$ is conformal on $S$, and $\rho$ is a mass distribution for $S'$. Let 
$$
L_f(p)=\limsup_{q\to p}\frac{|f(q)-f(p)|}{|q-p|},
$$
where $|\cdot|$ now stands for the intrinsic distance.
This is a non-negative Borel measurable function with $L_f(p)=|D_pf|$ at each point $p$ of differentiability of $f$~\cite[\S5]{Va71}.
We define the \emph{pullback distribution} $f^*(\rho)$ to be $\rho(f)L_f$ on $S$ and $f^*(\rho)(D_k)=\rho(f(D_k)),\ k\in \Z$. 
\begin{lemma}\label{L:Confinv}
Let $f\: M\to M'$ be a quasiconformal homeomorphism that is conformal on $S=M\setminus\cup_{k\in\Z}D_k$.  Let $\Gamma$ be an arbitrary curve fa\-mi\-ly in $M$. If $\rho$ is the extremal mass distribution for $f(\Gamma)=\{f(\gamma)\: \gamma\in\Gamma\}$, then the pullback distribution $f^*(\rho)$ is the extremal mass distribution for $\Gamma$. 
Moreover, 
$$
{\rm mod}_{f(S)}(f(\Gamma))={\rm mod}_S(\Gamma).
$$ 
\end{lemma}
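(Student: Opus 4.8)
\noindent
The plan is to show that the pullback map $\rho\mapsto f^*(\rho)$ is a mass-preserving bijection from the mass distributions admissible for $f(\Gamma)$ onto those admissible for $\Gamma$; both assertions of the lemma then follow at once, with uniqueness coming from Proposition~\ref{P:Extremaldistr}. The core step I would establish first is the curve-integral identity
$$
\int_{f(\gamma)\cap S'}\rho\,ds=\int_{\gamma\cap S}\rho(f)\,L_f\,ds=\int_{\gamma\cap S}f^*(\rho)\,ds
$$
for every locally rectifiable $\gamma$ outside a curve family $\Gamma_1\subseteq M$ with $\mathrm{mod}_2(\Gamma_1)=0$. This rests on the standard regularity theory of quasiconformal maps~\cite[\S5,\ \S28]{Va71}: $f$ is absolutely continuous on lines, differentiable a.e., satisfies Lusin's condition (N), and for all $\gamma\notin\Gamma_1$ the curve $f\circ\gamma$ is locally rectifiable with $|(f\circ\gamma)'(t)|=|D_{\gamma(t)}f(\gamma'(t))|$ for a.e.\ $t$. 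Since $f$ is conformal on $S$, at a.e.\ $p\in S$ one has $|D_pf(v)|=L_f(p)|v|$ for every $v$; and since $f$ is a homeomorphism with $f(S)=S'$, the parameter set on which $\gamma$ lies in $S$ equals the one on which $f\circ\gamma$ lies in $S'$. A change of variables in the arc-length parametrization then yields the identity. For the discrete part, $\gamma\cap D_k\neq\emptyset$ if and only if $f(\gamma)\cap D_k'\neq\emptyset$, and $f^*(\rho)(D_k)=\rho(D_k')$ by definition, so the two sums in~\eqref{E:Adm} agree. Consequently, if $\rho$ is admissible for $f(\Gamma)$ with exceptional family $\Gamma_0'$, then $f^*(\rho)$ satisfies~\eqref{E:Adm} for every $\gamma\in\Gamma\setminus(\Gamma_1\cup f^{-1}(\Gamma_0'))$; by the classical quasi-invariance of $\mathrm{mod}_2$ under the quasiconformal map $f^{-1}$ we have $\mathrm{mod}_2(f^{-1}(\Gamma_0'))=0$, so this is an exceptional family and $f^*(\rho)$ is admissible for $\Gamma$.

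Next I would check that $\mathrm{mass}(f^*(\rho))=\mathrm{mass}(\rho)$. Condition (N) together with the fact that the Jacobian of $f$ equals $L_f^2$ a.e.\ on $S$ (conformality) gives, by the change-of-variables formula, $\int_S f^*(\rho)^2\,d\sigma=\int_S\rho(f)^2L_f^2\,d\sigma=\int_{S'}\rho^2\,d\sigma'$, where $\sigma'$ is the area measure of $M'$, while $\sum_k f^*(\rho)(D_k)^2=\sum_k\rho(D_k')^2$ since $k\mapsto D_k'$ is a bijection; hence $f^*(\rho)\in L^2(S)\oplus l^2$ and has the same total mass as $\rho$. Taking $\rho$ to be the extremal distribution for $f(\Gamma)$, this yields $\mathrm{mod}_S(\Gamma)\le\mathrm{mass}(f^*(\rho))=\mathrm{mass}(\rho)=\mathrm{mod}_{f(S)}(f(\Gamma))$. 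The inverse $f^{-1}$ is again quasiconformal, it is conformal on $S'$ (its pointwise dilatation at $f(p)$ equals that of $f$ at $p$, and $f$ preserves null sets), and one verifies directly that $(f^{-1})^*\circ f^*$ is the identity on mass distributions for $S'$, using $L_f(p)L_{f^{-1}}(f(p))=1$ for a.e.\ $p\in S$. Applying the inequality just proved to $f^{-1}$ and $f(\Gamma)$ gives the reverse inequality, so $\mathrm{mod}_{f(S)}(f(\Gamma))=\mathrm{mod}_S(\Gamma)$. Therefore $f^*(\rho)$ is an admissible distribution for $\Gamma$ whose mass equals $\mathrm{mod}_S(\Gamma)$, hence is extremal, and by the uniqueness statement in Proposition~\ref{P:Extremaldistr} it is the extremal distribution for $\Gamma$.

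The hard part will be the first curve-integral identity: I will need Fuglede's theorem and the ACL and a.e.-differentiability properties of quasiconformal maps to know the length transformation holds outside a family of $2$-modulus zero, and I must carefully track which portion of $\gamma$ lies over $S$ and which over the complementary domains $D_k$, where $f$ is only quasiconformal rather than conformal. Everything else is routine bookkeeping built on quasi-invariance of $\mathrm{mod}_2$ and the change-of-variables formula, both of which are already available.
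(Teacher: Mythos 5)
Your proposal is correct and follows essentially the same route as the paper's proof: Fuglede's theorem to discard a modulus-zero exceptional family, the length transformation for quasiconformal maps along curves, the change-of-variables formula for the mass (using that $J_f=L_f^2$ a.e.\ on $S$ by conformality), and the symmetry argument applying the same reasoning to $f^{-1}$ to get the reverse inequality. The only minor difference is that you aim for an exact curve-integral identity whereas the paper invokes the one-sided inequality from~\cite[Theorem~5.3]{Va71}, which is all that is needed for admissibility of the pullback and is technically more economical; your equality version is also obtainable but requires the additional observation that the (area-)null set where $f$ fails to be totally differentiable or fails to be conformal in $S$ is avoided by almost every curve in the modulus sense.
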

\begin{proof}
Let $\Gamma_0\subseteq f(\Gamma)$ be an exceptional curve family for $\rho$, and let $\Gamma_0'=f^{-1}(\Gamma_0)$ be the preimage subfamily of $\Gamma$. Then, since $f$ is quasiconformal, the geometric definition of quasiconformality gives ${\rm mod}_2(\Gamma_0')=0$; see~\cite{He01} or \cite{Va71}. 
In addition, let $\Gamma_0''$ be the subfamily of $\Gamma$ consisting of all locally rectifiable curves which have closed subcurves on which $f$ is not absolutely continuous. Fuglede's Theorem~\cite[Theorem~28.2]{Va71} gives ${\rm mod}_2(\Gamma_0'')=0$, and thus ${\rm mod}_2(\Gamma_0'\cup \Gamma_0'')=0$. If $\gamma\in\Gamma\setminus(\Gamma_0'\cup\Gamma_0'')$, then, applying~\cite[Theorem~5.3]{Va71},
$$
\begin{aligned}
&\int_{\gamma\cap S}\rho(f)L_f\, ds+ \sum_{k\: \gamma\cap D_k\neq\emptyset}\rho(f(D_k))
\\
&\ge\int_{f(\gamma)\cap S}\rho\, ds+\sum_{k\: f(\gamma)\cap D_k\neq\emptyset}\rho(D_k)\ge1,
\end{aligned}
$$ 
i.e., the pullback mass distribution $f^*(\rho)$ is admissible for $\Gamma$. The total mass of $f^*(\rho)$ is
$$
\begin{aligned}
&\int_S\rho(f)^2L_f^2d\sigma+\sum_{k\in \Z}\rho(f(D_k))^2\\
&=\int_S\rho(f)^2|Df|^2d\sigma+\sum_{k\in \Z}\rho(f(D_k))^2\\
&\le\int_S\rho(f)^2|J_f|d\sigma+\sum_{k\in \Z}\rho(f(D_k))^2\\
&=\int_S\rho^2d\sigma+\sum_{k\in \Z}\rho(D_k)^2={\rm mass}(\rho),
\end{aligned}
$$
where $J_f$ is the Jacobian determinant of $f$, and the above inequality  uses the assumption that $f$ is conformal on $S$. Therefore, 
$$
{\rm mod}_S(\Gamma)\le{\rm mod}_S(f(\Gamma)).
$$
The converse inequality, and indeed the full lemma, follow from the facts that the inverse of a quasiconformal map is quasiconformal, and the inverse of a conformal map is conformal.  
\end{proof}

\section{Nielsen--Thurston Classification}
\label{S:PA}

\noindent
Recall that the \emph{mapping class group} of a closed surface $M$, denoted ${\rm Mod}(M)$, is the group of isotopy classes of  orientation preserving homeomorphisms ${\rm Homeo}^+(M)$. Equivalently,
$$
{\rm Mod}(S)={\rm Homeo}^+(M)/{\rm Homeo}_0(M),
$$
where ${\rm Homeo}_0(M)$ denotes the connected component of the identity in ${\rm Homeo}^+(M)$.
There are several equivalent variations of the definition of the mapping class group, for example, where homeomorphisms are replaced by diffeomorphisms or isotopy classes replaced by homotopy classes. For this and other facts and properties of mapping class groups one can consult~\cite{FM12}. 


\begin{theorem}\cite[Theorems~13.1, 13.2]{FM12}
If $M$ is a closed surface, each class $[f]\in{\rm Mod}(M)$ is either periodic, or reducible, or pseudo-Anosov. 
\end{theorem}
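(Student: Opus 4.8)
This is the Nielsen--Thurston classification, and the plan is to reprove it along Thurston's original lines --- via the action of ${\rm Mod}(M)$ on a compactification of Teichm\"uller space --- after treating the low-genus cases by hand. If $M$ is the sphere, then ${\rm Mod}(M)$ is trivial and every class is (vacuously) periodic. If $M$ is the torus, then ${\rm Mod}(M)\cong SL_2(\Z)$ acts on $\mathcal T(M)\cong\Hp$ by M\"obius transformations, and the elliptic/parabolic/hyperbolic trichotomy for these maps translates respectively into: finite order (periodic); fixing the isotopy class of an essential simple closed curve (reducible); and a linear Anosov map, which is the genus-one avatar of pseudo-Anosov. From now on assume $g=g(M)\ge2$.

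The central input is Thurston's compactification theorem: $\mathcal T(M)\cong\R^{6g-6}$ admits a natural compactification $\overline{\mathcal T(M)}=\mathcal T(M)\cup\mathcal{PMF}(M)$ homeomorphic to a closed ball $\overline{\mathbb B}^{\,6g-6}$, on which the action of ${\rm Mod}(M)$ by isometries of the Teichm\"uller metric extends to an action by homeomorphisms. Given a class $[f]$, the Brouwer fixed-point theorem produces a fixed point $x\in\overline{\mathcal T(M)}$, and the three outcomes of the theorem correspond to where $x$ lies.

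If $x$ is an interior point, then $[f]$ fixes a point of $\mathcal T(M)$; the stabilizer of that point in ${\rm Mod}(M)$ is the isometry group of the corresponding closed hyperbolic surface, which is finite, so $[f]$ has finite order and is periodic. If instead $x=[\mathcal F]\in\mathcal{PMF}(M)$, then $f(\mathcal F)=\lambda\mathcal F$ for some measured foliation $\mathcal F$ and $\lambda>0$, and applying the same fixed-point argument to $[f]^{-1}$ supplies a second invariant projective class. The decisive question is whether $[f]$ has an invariant \emph{arational} measured foliation --- one that fills $M$ (positive geometric intersection number with every essential simple closed curve) and has no closed leaves. If it does, then the transverse invariant foliation coming from $[f]^{-1}$ is arational as well, the two together endow $M$ with a flat structure --- equivalently a holomorphic quadratic differential --- on which $f$ acts affinely, and the elliptic/parabolic/hyperbolic trichotomy for the resulting $2\times2$ derivative matrix exhibits $[f]$ as periodic, reducible (a multitwist along the cylinder decomposition), or pseudo-Anosov (stretch factor $\lambda>1$), respectively. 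If $[f]$ has no invariant arational foliation --- e.g. the invariant foliation $\mathcal F$ has a closed leaf, or some essential simple closed curve is disjoint from its support --- then one extracts from $\mathcal F$ a canonical nonempty finite system of disjoint essential simple closed curves (its closed leaves, together with the boundary curves of the minimal and annular components of the decomposition of $M$ along $\mathcal F$), which $f$ permutes up to isotopy, so $[f]$ is reducible.

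The principal obstacle is twofold. First, almost all the weight rests in the preliminaries: constructing the space $\mathcal{MF}(M)$ of measured foliations together with the geometric intersection pairing, and proving that $\overline{\mathcal T(M)}$ is a ball on which ${\rm Mod}(M)$ acts continuously --- this \emph{is} Thurston's theory, and there is no real shortcut. Second, within the case analysis the delicate steps are (i) showing that failure to fill genuinely produces an $[f]$-\emph{invariant} reducing system --- one needs a canonical choice (via minimal components) so that the system is preserved, not merely some disjoint curves produced --- and (ii) in the filling case, upgrading the abstract transverse pair of measured foliations to a bona fide flat structure carrying an affine self-map that realizes $[f]$. I expect (ii), together with the compactification theorem itself, to absorb the bulk of the effort. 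An alternative route, more directly in the spirit of the holomorphic quadratic differentials used later in this paper, is Bers' proof, which sorts $[f]$ according to whether $\inf_{Y\in\mathcal T(M)}d_{\mathcal T}(Y,[f]\cdot Y)$ vanishes or is positive and whether or not it is attained, the four cases being periodic, reducible (in two ways), and pseudo-Anosov.
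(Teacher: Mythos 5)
The paper does not prove this statement---it is imported verbatim as a citation to Farb--Margalit~\cite[Theorems~13.1, 13.2]{FM12}, so there is no in-paper argument to compare against. What you have written is a sketch of Thurston's original proof via the compactification of Teichm\"uller space, and as a sketch it has the right shape: the low-genus cases are dispatched correctly (${\rm Mod}(S^2)$ trivial; ${\rm Mod}(T^2)\cong SL_2(\Z)$ and the elliptic/parabolic/hyperbolic trichotomy), the Brouwer fixed-point argument on $\overline{\mathcal T(M)}\cong\overline{\mathbb B}^{6g-6}$ is the correct engine, the interior-fixed-point case (Hurwitz finiteness of the isometry group of a closed hyperbolic surface) is right, and you correctly identify arationality of the boundary fixed foliation as the pivot between the reducible and pseudo-Anosov outcomes. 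You also explicitly flag the genuinely heavy parts --- constructing $\mathcal{MF}(M)$ with its intersection pairing, proving $\overline{\mathcal T(M)}$ is a ball with continuous ${\rm Mod}(M)$-action, and upgrading a transverse filling pair to a flat structure with an affine representative --- and you mention Bers' Teichm\"uller-metric alternative, which is the proof Farb--Margalit actually present in Chapter~13. For calibration: this is a citation in the paper precisely because the result is a deep theorem with no short proof, so flagging the compactification theorem as the irreducible core is the right assessment.

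One step in your sketch deserves a caveat: when the boundary fixed point $[\mathcal F]$ is arational, you pass to $[f]^{-1}$ to get a second invariant projective class and then immediately build a flat structure from the transverse pair. This requires knowing that the two invariant foliations are actually transverse (topologically distinct) and that each carries a genuine transverse measure scaled by $\lambda^{\pm1}$ with $\lambda\ne1$. The case $\lambda=1$ with $\mathcal F$ arational does not yield a flat structure on the nose and must be handled separately (in Thurston's argument one shows an arational $\mathcal F$ fixed with factor $1$ forces $[f]$ to be periodic, using unique ergodicity of arational foliations). Your trichotomy via a ``$2\times2$ derivative matrix'' only makes sense once the flat structure exists, so the logic as written is slightly circular in this corner; it is a standard gap in informal presentations, but worth naming since you have otherwise been careful to inventory exactly where the effort lies.
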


Recall, an element $f\in{\rm Mod}(M)$ is called \emph{periodic} if there is $n\in\N$ such that $f^n$ is isotopic to the identity. An element $f\in{\rm Mod}(M)$ is called \emph{reducible} if there is a non-empty set $\{[c_1],\dots, [c_k]\}$ of isotopy classes of essential simple closed curves in $M$ such that $i([c_i], [c_j])=0,\ i\neq j$, and  $[f(c_i)]=[c_i]$ for all $i=1,2\dots, k$. Here, $[c]$ denotes the homotopy class of a simple closed curve $c$, and $i([a],[b])$ denotes the \emph{geometric intersection number}, i.e., the minimal number of intersection points between a representative curve in the homotopy class $[a]$ and a representative curve in the class $[b]$. 
Finally, an element $f\in{\rm Mod}(M)$ is called  \emph{pseudo-Anosov} if there exists a pair of transverse measured foliations $(\mathcal F^s,\mu_s)$ and $(\mathcal F^u,\mu_u)$ on $M$, the former is called \emph{stable} and the latter \emph{unstable}, a \emph{stretch factor}$\lambda>1$, and a representative \emph{pseudo-Anosov homeomorphism} $\phi$, i.e., 
$$
\phi\cdot(\mathcal F^s,\mu_s)=(\mathcal F^s,\lambda^{-1}\mu_s)\quad{\rm and}\quad \phi\cdot(\mathcal F^u,\mu_u)=(\mathcal F^u,\lambda\mu_u).
$$  
This definition includes Anosov classes on 2-torus $\T^2$.
The difference between Anosov and pseudo-Anosov classes is that transverse measured foliations of the latter may have  singularities. 

\section{Pseudo-Anosov diffeomorphisms}
\label{S:PAD}

\noindent
In this section we exclude the possibility of the map $f$ from Theorem~\ref{T:Main} to be pseudo-Anosov. 

Let $\gamma$ be a homotopically non-trivial closed curve in $M$ and $D$ be a domain in $M$ with contractible in $M$ closure ${\rm Cl}(D)$.  We say that there are $N$ \emph{essential intersections} of $\gamma$ with $D$ if $\gamma\setminus{\rm Cl}(D)$ has $N$ open arcs $\alpha$, and only these arcs, such that $\alpha\cup{\rm Cl}(D)$ is not contractible in $M$. Slightly abusing the terminology, we call the $N$ complementary closed arcs $\beta$ of $\gamma$ to the open arcs $\alpha$  \emph{essential intersections} of $\gamma$ with $D$.
A bound  on the number of essential intersections is needed for the transboundary modulus estimate since the admissibility condition~\eqref{E:Adm} counts intersections with $D_k,\ k\in\Z$, only once.


\begin{lemma}\label{L:EssInt}
Let $M$ be a closed surface of genus at least 1, and 
$S\subset M$ be a compact subset whose complementary components $\{D_k\}_{k\in\Z}$ have contractible in $M$ closures  and such that  $\lim_{|k|\to\infty}{\rm diam}(D_k)=0$. 
Let $L>0$ be fixed and let $\Gamma$ be a family of non-contractible rectifiable closed curves in $M$ such that the length of each $\gamma\in\Gamma$ is at most $L$.
Then there exists $N\in\N$ such that each $\gamma\in\Gamma$ has at most $N$ essential intersections with any $D_k,\ k\in\Z$. Moreover, for any $\gamma$ and any essential intersection $\beta$ of $\gamma$ with $D_k,\ k\in\Z$, the set $\beta\cup D_k$ is contractible in $M$.  
\end{lemma}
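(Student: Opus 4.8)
The plan is to bound the number of essential intersections using a length/diameter dichotomy combined with a compactness argument on the surface. First I would fix the length bound $L$ and note that, since $\operatorname{diam}(D_k)\to0$, there is an index set $K_0$ (finite) outside of which $\operatorname{diam}(D_k)$ is smaller than, say, the injectivity radius of $M$; for such small domains, a lift picture in the universal cover is available. For the finitely many ``large'' domains $D_k$ with $k\in K_0$, I would argue separately that each admits only finitely many essential intersection types, and take the maximum. So the real work is a uniform bound for the small domains.

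For a small domain $D_k$, let $\beta$ be an essential intersection of $\gamma$ with $D_k$, i.e.\ a closed subarc of $\gamma$ whose complementary open arc $\alpha$ satisfies: $\alpha\cup\operatorname{Cl}(D_k)$ is not contractible in $M$. Since $\operatorname{Cl}(D_k)$ is contractible and has small diameter, it lies in a small contractible (hence simply connected) neighborhood $U_k$ of diameter comparable to $\operatorname{diam}(D_k)$; then $\alpha\cup\operatorname{Cl}(D_k)$ is non-contractible precisely when the arc $\alpha$, together with a short arc inside $U_k$ joining its endpoints, forms a non-contractible loop. The key geometric point is: the endpoints of $\alpha$ lie on (the closure of) $D_k$, so they are within distance $\operatorname{diam}(D_k)$ of each other, which is smaller than the injectivity radius; hence there is a unique short geodesic arc $\eta$ joining them inside a ball, and the loop $\alpha*\eta$ is non-contractible. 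Now distinct essential intersections $\beta_1,\dots,\beta_m$ of the \emph{same} $\gamma$ with the \emph{same} $D_k$ yield arcs $\alpha_1,\dots,\alpha_m$ which are pairwise disjoint subarcs of $\gamma$ (they are the complementary arcs), hence $\sum_j \operatorname{length}(\alpha_j)\le L$.

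The main obstacle — and the crux of the argument — is ruling out many ``short'' non-contractible loops of the above form. If infinitely many (or more than some $N$) of the $\alpha_j$ were short, each $\alpha_j*\eta_j$ would be a non-contractible loop of length at most $\operatorname{length}(\alpha_j)+\operatorname{diam}(D_k)$, which can be made arbitrarily small as $\operatorname{diam}(D_k)\to0$; but a closed surface of genus at least one has a positive lower bound (the systole) on the length of non-contractible loops. Hence $\operatorname{length}(\alpha_j)\ge \operatorname{sys}(M)-\operatorname{diam}(D_k)\ge \operatorname{sys}(M)/2$ for all small enough $D_k$, and therefore $m\le 2L/\operatorname{sys}(M)$. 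Setting $N$ to be this bound together with the finite bound coming from the finitely many large domains $D_k$, $k\in K_0$, finishes the count. For the last assertion, that $\beta\cup D_k$ is contractible in $M$: by definition of essential intersection, the \emph{other} complementary arc (call it $\alpha'$, the ``short'' complement) satisfies that $\alpha'\cup\operatorname{Cl}(D_k)$ \emph{is} contractible; but $\beta$ is contained in $\alpha'$ up to its role as the closed complementary arc, so $\beta\cup D_k$ deformation retracts into $\alpha'\cup\operatorname{Cl}(D_k)$ and hence is contractible. (Here one uses that exactly the $N$ arcs $\alpha$ listed are the non-contractible ones, so every complementary closed arc $\beta$ sits inside the union of the contractible-type arcs.) The only care needed is to phrase ``essential intersection'' consistently with Definition above and to handle the boundary case where $\gamma$ itself lies in $\operatorname{Cl}(D_k)$, which cannot happen since $\gamma$ is non-contractible and $\operatorname{Cl}(D_k)$ is contractible.
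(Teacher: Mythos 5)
Your count for the ``small'' domains is a genuine, correct alternative to the paper's argument. Where the paper shows that every essential open arc $\alpha$ must exit a contractible $\eta_k$-neighborhood of $\operatorname{Cl}(D_k)$ (hence has length at least $\eta_k$, and $\inf_k\eta_k>0$ because $\operatorname{diam}(D_k)\to0$), you instead close up $\alpha$ by a short geodesic segment to obtain a non-contractible loop of length at most $\operatorname{length}(\alpha)+\operatorname{diam}(D_k)$ and compare with the systole of $M$. Both yield a uniform positive lower bound on $\operatorname{length}(\alpha)$, hence $N\le L/(\text{that bound})$, and the systole version is arguably cleaner once one restricts to small $D_k$.

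There are, however, two real gaps. First, your treatment of the finitely many ``large'' $D_k$ is a placeholder: ``each admits only finitely many essential intersection types'' names no mechanism. The statement you need is that for each fixed $k$ there is some $\eta_k>0$ such that every essential $\alpha$ for $D_k$ has length at least $\eta_k$ --- which is exactly the contractible-neighborhood observation the paper uses, so as written your proof has not actually avoided it; it has just deferred it. You should either prove this for the finitely many large $D_k$ (easy, but needs saying) or adopt the paper's uniform formulation outright.

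Second, and more seriously, the ``moreover'' argument rests on a misreading of the definition. The essential intersection $\beta$ is a \emph{closed} subarc of $\gamma$ lying between two consecutive essential open arcs; it is not itself a component of $\gamma\setminus\operatorname{Cl}(D_k)$, and in general it contains the parts of $\gamma$ inside $\operatorname{Cl}(D_k)$ together with \emph{several} inessential open excursions $\delta_1,\delta_2,\dots$ out of $\operatorname{Cl}(D_k)$. So there is no single ``short complement $\alpha'$'' with $\beta\subseteq\alpha'$, and the claimed deformation retraction of $\beta\cup D_k$ into $\alpha'\cup\operatorname{Cl}(D_k)$ does not exist. The issue one must actually confront is whether attaching \emph{many} individually-inessential arcs $\delta_i$ to the contractible set $\operatorname{Cl}(D_k)$ can produce something non-contractible in $M$. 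The paper handles this by noting that any $\delta_i$ which leaves the contractible $\eta_k$-neighborhood has length at least $\eta_k$, so only finitely many of them can do so (else $\gamma$ would have infinite length); the remaining $\delta_i$ stay inside a contractible neighborhood and contribute nothing. Your sketch does not address this at all.

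So: the core counting idea is sound and is a legitimate variant of the paper's, but the lemma is not yet proved as written --- the large-$D_k$ case needs an actual argument, and the final contractibility assertion needs to be redone from the correct reading of what $\beta$ is.
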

\begin{proof}
The assumption that ${\rm Cl}(D_k)$ is contractible in a closed surface $M$ implies that it has an open contractible in $M$ $\eta_k$-neighborhood, $\eta_k>0$.
Thus, for each $\gamma\in\Gamma$ and fixed $k\in\Z$, an open arc $\alpha$, of $\gamma\setminus {\rm Cl}(D_k)$ such that $\alpha\cup{\rm Cl}(D_k)$ is not contractible in $M$ has length at least $\eta_k>0$. 
 Since $M$ is compact and ${\rm diam}(D_k)\to0,\ |k|\to\infty$, the sequence $\eta_k$ may be chosen to have a lower bound $\eta>0$ that is  independent of $k$. Finally, since the length of each $\gamma\in\Gamma$ is at most $L$, the first part of the lemma follows for $N=[L/\eta]$, the integer part of $L/\eta$. 

Now, if $\beta$ is an essential intersection of $\gamma$ with $D_k$, for each open arc $\delta$ of $\beta\setminus{\rm Cl}(D_k),\ k\in\Z$, the set $\delta\cup{\rm Cl}(D_k)$  is contractible in $M$. 
If $\beta\cup D_k$ were not contractible in $M$, there would be an infinite sequence $(\delta_i)_{i\in\N}$ of open arcs of $\beta\setminus{\rm Cl}(D_k)$ that would have to leave the contractible $\eta_k$-neighborhood of $D_k$, and hence the length of $\beta$, and thus of $\gamma$, would be infinite, a contradiction.  	 
\end{proof}

Let $\gamma$ be a simple closed geodesic in $M$, necessarily homotopically non-trivial. 
According to the Collar Theorem, see, e.g., \cite{Bu78}, if genus of $S$ is at least 2, $\gamma$ has a $d$-neighborhood homeomorphic to $\T^1\times(-1,1)$ with $d={\rm arccosh}({\rm coth}(l(\gamma))/2)$, where $l(\gamma)$ is the length of $\gamma$. There is no such an estimate for the torus $\T^2$, however there is still a neighborhood of $\gamma$ in this case since simple closed geodesics on $\T^2$ correspond to straight lines in the plane with rational slopes. In either case, $\gamma$ has an open $d$-neighborhood $U$ and a conformal map $\xi\: U\to A$  to a round annulus
$A=\{a<|z|<b\}$ in the plane. We foliate this annulus with circles $C_t=\{|z|=t\},\ a<t<b$, and let $\gamma_t=\xi^{-1}(C_t),\ a<t<b$, be the corresponding  simple closed curves in the $d$-neighborhood of $\gamma$.  Each $\gamma_t,\  a<t<b$, is necessarily homotopic to $\gamma$. By choosing a larger $a$ and a smaller $b$ we may and will assume that the derivative of $\xi$, with respect to the Euclidean metric in $A$ and the hyperbolic metric in $U$, is uniformly bounded above and below. We denote 
\begin{equation}\label{E:Gam}
\Gamma=\{\gamma_t\: a<t<b\}.
\end{equation}

\begin{corollary}\label{C:NEI}
Let $M$ be a closed surface of genus at least 1 and $f$ a homeomorphism of $M$ that permutes a dense collection $\{D_k\}_{k\in\Z}$ of domains with contractible in $M$ closures and ${\rm diam}(D_k)\to0,\ |k|\to\infty$. Let $\gamma$ be a simple closed geodesic in $M$ and $\Gamma$ a corresponding curve family defined as in~\eqref{E:Gam}.
Then there exists $N\in\N$, such that for each $k, n\in\N$ and each $t,\ a<t<b$, the curve $f^n(\gamma_t)$ has at most $N$ essential intersections with $D_k$.
\end{corollary}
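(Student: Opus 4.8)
The plan is to reduce Corollary~\ref{C:NEI} to Lemma~\ref{L:EssInt} by exhibiting a single finite length bound $L$ that works simultaneously for all curves $f^n(\gamma_t)$, $a<t<b$, $n\in\N$. The obvious difficulty is that iterating a homeomorphism $f$ can wildly distort lengths, so there is no uniform bound on $\length(f^n(\gamma_t))$ in general. The resolution is that we are free to replace each $f^n(\gamma_t)$ by a geodesic: since $\gamma_t$ is homotopic to the simple closed geodesic $\gamma$, its image $f^n(\gamma_t)$ is homotopic to $f^n(\gamma)$, which is in turn freely homotopic to a \emph{unique} closed geodesic $\gamma_n^*$ in $M$ (recall $M$ has genus at least $1$, so its universal cover is $\C$ or $\D$; in the flat torus case free homotopy classes of closed curves are represented by straight segments, in the hyperbolic case by closed geodesics). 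Crucially, essential intersections with a domain $D_k$ are a homotopy-theoretic notion in the sense made precise in Lemma~\ref{L:EssInt}, so the number of essential intersections is, up to the bound we are after, governed by a geodesic representative.

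Concretely, I would argue as follows. First, fix $n$. Work in the universal cover $\widetilde M$ with deck group $G$, so $M=\widetilde M/G$. The curve $f^n(\gamma_t)$ represents a conjugacy class in $\pi_1(M)=G$; let $g_n\in G$ be a representative of the class of $f^n(\gamma)$ (independent of $t$ since all $\gamma_t$ are homotopic to $\gamma$), and let $\gamma_n^*$ be the geodesic representative, of length $\ell_n=\length(\gamma_n^*)$, equal to the translation length of $g_n$. Because $f$ is a homeomorphism of the compact surface $M$, it induces an automorphism $f_*$ of $\pi_1(M)$, and the conjugacy classes $(f_*^n[\gamma])_{n\in\N}$ need not have bounded translation length — so at this stage we do \emph{not} yet have a uniform $L$. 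The key observation that rescues the argument is that $f$ \emph{permutes} the domains $D_k$: hence $f^n$ maps the residual set $S$ to itself and maps the family $\{D_k\}$ bijectively to itself. Therefore the number of essential intersections of $f^n(\gamma_t)$ with $D_k$ equals the number of essential intersections of $\gamma_t$ with $f^{-n}(D_k) = D_{k'}$ for the appropriate index $k'=k'(k,n)$. In other words, the claim for $f^n(\gamma_t)$ and $D_k$ is \emph{equivalent} to the claim for $\gamma_t$ and $D_{k'}$.

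So it suffices to prove: there is $N\in\N$ such that for every $t$ with $a<t<b$ and every $k'\in\Z$, the curve $\gamma_t$ has at most $N$ essential intersections with $D_{k'}$, and moreover for each such essential intersection $\beta$, the set $\beta\cup D_{k'}$ is contractible in $M$. Now the curves $\gamma_t$, $a<t<b$, all lie in the fixed $d$-neighborhood $U$ of $\gamma$, and by the normalization of $\xi$ chosen just before~\eqref{E:Gam} their hyperbolic (or flat) lengths are uniformly bounded: $\length(\gamma_t)=\length_{A}(C_t)\cdot(\text{bounded factor})\le 2\pi b\cdot(\text{bounded factor})=:L$. Thus $\Gamma=\{\gamma_t: a<t<b\}$ is exactly a family of non-contractible rectifiable closed curves of length at most $L$, and Lemma~\ref{L:EssInt} applies verbatim — using the standing hypotheses that the $D_k$ have contractible-in-$M$ closures and $\diam(D_k)\to0$ — to produce the desired $N=[L/\eta]$ and the contractibility of $\beta\cup D_{k'}$.

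The main obstacle, and the place the argument must be handled with care, is the very first paragraph's worry: that a naive approach replacing $f^n(\gamma_t)$ by its geodesic representative $\gamma_n^*$ would only control essential intersections up to homotopy and would still leave $\ell_n$ unbounded. The clean fix is to avoid geodesic representatives of the \emph{images} altogether and instead exploit that $f^n$ permutes the $D_k$'s, transferring the question back to the fixed uniformly-short family $\Gamma$ and a (varying) domain $D_{k'}$; since Lemma~\ref{L:EssInt} already gives a bound $N$ uniform over \emph{all} $k'\in\Z$, the permutation index $k'(k,n)$ is harmless. One small point to verify is that "essential intersection" is preserved under the homeomorphism $f^{-n}$: an open arc $\alpha$ of $f^n(\gamma_t)\setminus\Cl(D_k)$ with $\alpha\cup\Cl(D_k)$ non-contractible in $M$ corresponds bijectively, via $f^{-n}$, to an open arc $f^{-n}(\alpha)$ of $\gamma_t\setminus\Cl(D_{k'})$ with $f^{-n}(\alpha)\cup\Cl(D_{k'})$ non-contractible, because $f^{-n}$ is a homeomorphism of $M$ carrying $\Cl(D_k)$ onto $\Cl(D_{k'})$ and hence inducing a bijection on homotopy classes; this is routine.
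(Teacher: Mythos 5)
Your argument is correct and takes essentially the same route as the paper: the paper likewise reduces to the $n=0$ case by observing that $f^n$ permutes the family $\{D_k\}$ and, being a homeomorphism, preserves the property of having an essential intersection, then applies Lemma~\ref{L:EssInt} to the uniformly-bounded-length family $\Gamma=\{\gamma_t\}$. The detour through geodesic representatives $\gamma_n^*$ in your second paragraph is (as you say) a red herring that you correctly discard; dropping it would make the write-up match the paper's proof almost verbatim.
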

\begin{proof}
Since $\xi\: U\to A$ has derivative bounded below,  there exists $L<\infty$ such that each $\gamma_t\in\Gamma,\ a<t<b$, has length at most $L$. Now the corollary follows from Lemma~\ref{L:EssInt} for $n=0$. 
I.e., there exists $N\in\N$ such that for each $k\in\N$ and each $t,\ a<t<b$, the curve $\gamma_t$ has at most $N$ essential intersections with $D_k$. The map $f^n$ is a homeomorphism for each $n\in\N$, and thus preserves the contractibility property for subsets of $M$, and, in particular, the property of having an essential intersection. Therefore,  the same $N$ also works for each $n\in\N$, and the corollary follows.
\end{proof}


To prove Lemma~\ref{L:PosFin} below we need to discuss the notion and elementary properties of an annular covering surface;  see~\cite[Section~2.2]{St84} for more details.   Let $M$ be a closed surface of genus at least 1 and $\gamma$ a simple closed geodesic in $M$. 
Let $\widetilde M$ be the universal covering surface of $M$, i.e., either the Euclidean or the hyperbolic plane. 
Let $P_0$ be a base point on $\gamma$ and $\widetilde{P_0}$ is one of the preimages of $P_0$ under the covering map $\psi\:\widetilde M\to M$. The geodesic $\gamma$ has a unique lift $\widetilde\gamma$ to $\widetilde M$ that contains $\widetilde{P_0}$ and extends indefinitely in both directions. If $\gamma$ is parametrized by $[0,1]$ with $P_0=\gamma(0)=\gamma(1)$, and $\widetilde\gamma(t),\ t\in[0,1]$, is the lift of $\gamma(t),\ t\in[0,1]$, with $\widetilde\gamma(0)=\widetilde{P_0}$, there exists 
 a covering transformation $T_\gamma$, i.e., an orientation preserving isometry of $\widetilde M$ with $\psi\circ T_\gamma=\psi$, such that  $T_\gamma(\widetilde{P_0})
 =\widetilde\gamma(1)$. The isometry $T_\gamma$ leaves the full lift $\widetilde\gamma$ invariant and
 the action of $T_\gamma$ on $\widetilde\gamma$ is a translation by the length of $\gamma$.   Up to isometry, the quotient $\widehat M_\gamma=\widetilde M/T_\gamma$ does not depend on either the base point $P_0$, or a particular preimage $\widetilde{P_0}$. We refer to such $\widehat M_\gamma$ as the \emph{annular covering surface} associated to $\gamma$. Topologically, $\widehat M_\gamma$ is an open annulus. The curve $\widehat\gamma=\widetilde\gamma/T_\gamma$ is a lift of $\gamma$ to $\widehat M_\gamma$. 
If $\gamma'$ is a closed curve in $M$ that is homotopic to $\gamma$, it has a lift $\widehat{\gamma'}$ to $\widehat M_\gamma$ homotopic to $\widehat\gamma$. 
\begin{lemma}\label{L:NLifts}
Let $\gamma'$ be a closed curve homotopic to a simple closed geodesic $\gamma$ in $M$ and $\widehat\gamma'$ be a lift in $\widehat M_\gamma$ of $\gamma'$ homotopic to $\widehat\gamma$ as above. 
If  $D$ is a domain in $M$ with a contractible in $M$ closure and such that $\gamma'\cap D\neq\emptyset$, then the number of preimages of $D$ in $\widehat M_\gamma$ that intersects $\widehat\gamma'$ is at most the number of essential intersections of $\gamma'$ with $D$. 
\end{lemma}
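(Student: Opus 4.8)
The plan is to exploit the covering $\pi\colon\widehat M_\gamma\to M$ restricted to $\widehat\gamma'$ and to match up the preimages of $D$ hit by $\widehat\gamma'$ with the essential intersections of $\gamma'$ with $D$. First I would set up notation: let $\pi\colon\widehat M_\gamma\to M$ be the covering map, so that $\pi(\widehat\gamma')=\gamma'$ and $\pi|_{\widehat\gamma'}$ is the natural degree-one parametrization (as $\widehat\gamma'$ is a lift of $\gamma'$ homotopic to $\widehat\gamma$, hence a single loop). The preimages of $\mathrm{Cl}(D)$ in $\widehat M_\gamma$ are a disjoint family $\{\widetilde D_j\}$, each mapped homeomorphically onto $\mathrm{Cl}(D)$ by $\pi$ (here the contractibility of $\mathrm{Cl}(D)$ in $M$ is used: it lifts homeomorphically to the annular cover, since its inclusion induces the trivial map on $\pi_1$, so each component of the preimage is a homeomorphic copy). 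Write $\gamma'\setminus\mathrm{Cl}(D)$ as a union of open arcs; call an open arc $\alpha$ \emph{essential} if $\alpha\cup\mathrm{Cl}(D)$ is non-contractible in $M$, and recall that by the definition of essential intersection the number of essential open arcs equals the number $N$ of essential intersections $\beta$ of $\gamma'$ with $D$.

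The key step is the following dichotomy for a preimage $\widetilde D_j$ that meets $\widehat\gamma'$. Consider the arcs of $\widehat\gamma'\setminus\widetilde D_j$ (equivalently $\widehat\gamma'\setminus \mathrm{Cl}(\widetilde D_j)$, since $\mathrm{Cl}(\widetilde D_j)$ maps homeomorphically and $\gamma'$ meets $\mathrm{Cl}(D)$ only where $\widehat\gamma'$ meets $\mathrm{Cl}(\widetilde D_j)$ — up to the harmless boundary-touching issue). If \emph{every} such arc $\widetilde\alpha$ has the property that $\widetilde\alpha\cup\mathrm{Cl}(\widetilde D_j)$ is contractible in $\widehat M_\gamma$, then (since $\widehat M_\gamma$ is an annulus and $\mathrm{Cl}(\widetilde D_j)$ is contractible in it) one can homotope $\widehat\gamma'$ off $\widetilde D_j$, contradicting the fact that $\widehat\gamma'$ is homotopic to the core $\widehat\gamma$ and hence is non-contractible in the annulus $\widehat M_\gamma$ — unless $\widehat\gamma'$ meets no other preimage either, but then $\widehat\gamma'$ is disjoint from all of $\pi^{-1}(\mathrm{Cl}(D))$ away from $\widetilde D_j$ and can be pushed entirely off $\widetilde D_j$, again contradicting non-triviality. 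So at least one arc $\widetilde\alpha$ of $\widehat\gamma'\setminus\widetilde D_j$ satisfies: $\widetilde\alpha\cup\mathrm{Cl}(\widetilde D_j)$ is non-contractible in $\widehat M_\gamma$. Projecting down, $\alpha:=\pi(\widetilde\alpha)$ is an arc of $\gamma'\setminus\mathrm{Cl}(D)$ with $\alpha\cup\mathrm{Cl}(D)$ non-contractible in $M$ (a null-homotopy in $M$ of $\alpha\cup\mathrm{Cl}(D)$ would lift to a null-homotopy in $\widehat M_\gamma$ of $\widetilde\alpha\cup\mathrm{Cl}(\widetilde D_j)$, because the loop $\widetilde\alpha\cup\mathrm{Cl}(\widetilde D_j)$ is itself a lift and a null-homotopy of its projection lifts to one of it). Thus $\widetilde D_j$ determines an essential open arc $\alpha$ of $\gamma'$, hence an essential intersection of $\gamma'$ with $D$.

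It remains to check that the assignment $\widetilde D_j\mapsto\alpha$ (or the corresponding essential intersection) is injective: distinct preimages $\widetilde D_j\neq\widetilde D_{j'}$ are disjoint, so an arc $\widetilde\alpha$ of $\widehat\gamma'$ between two visits to $\widetilde D_j$ cannot also be an arc of $\widehat\gamma'$ between two visits to $\widetilde D_{j'}$ — more precisely, the endpoints of $\widetilde\alpha$ lie on $\mathrm{Cl}(\widetilde D_j)$, and if the projected arc $\alpha$ came from two different preimages its two endpoints would have to lie simultaneously on $\mathrm{Cl}(\widetilde D_j)$ and $\mathrm{Cl}(\widetilde D_{j'})$, impossible unless $j=j'$. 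Hence the number of preimages of $D$ meeting $\widehat\gamma'$ is at most the number of essential open arcs, which equals the number of essential intersections of $\gamma'$ with $D$, as claimed. The main obstacle I anticipate is the careful bookkeeping at the topological level: being precise about "an arc of $\widehat\gamma'\setminus\widetilde D_j$" when $\widehat\gamma'$ may touch $\partial\widetilde D_j$ without crossing, about orienting/parametrizing $\widehat\gamma'$ as a single loop so that its arcs project bijectively to arcs of $\gamma'$, and about the lifting-of-homotopies arguments — all of which are standard covering-space facts but need the contractibility-in-$M$ hypothesis invoked at exactly the right points (to split $\pi^{-1}(\mathrm{Cl}(D))$ into homeomorphic copies) and the annulus structure of $\widehat M_\gamma$ invoked to force the dichotomy. \qed
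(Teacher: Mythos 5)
Your strategy is genuinely different from the paper's. The paper argues ``forward'': it invokes the second conclusion of Lemma~\ref{L:EssInt} --- that each essential intersection $\beta$ satisfies $\beta\cup{\rm Cl}(D)$ contractible in $M$ --- to conclude that the lift of each of the $N$ closed arcs $\beta$ is a connected set meeting at most one preimage of $D$; since $\gamma'\cap{\rm Cl}(D)$ is contained in $\bigcup\beta$, the lift of $\gamma'$ meets at most $N$ preimages, and passing to the quotient by $T_\gamma$ cannot increase that count. You instead try to build an injection \emph{from} the set of preimages meeting $\widehat\gamma'$ \emph{into} the set of essential open arcs of $\gamma'\setminus{\rm Cl}(D)$, using only the annulus topology and the covering-space lifting criterion, and without invoking the contractibility of $\beta\cup{\rm Cl}(D)$ at all.

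There is, however, a genuine gap in the injection you construct. For a preimage $\widetilde D_j$ you extract an arc $\widetilde\alpha$ of $\widehat\gamma'\setminus{\rm Cl}(\widetilde D_j)$ with $\widetilde\alpha\cup{\rm Cl}(\widetilde D_j)$ non-contractible and then assert that $\alpha:=\pi(\widetilde\alpha)$ ``is an arc of $\gamma'\setminus{\rm Cl}(D)$.'' But $\widetilde\alpha$ is required to avoid only ${\rm Cl}(\widetilde D_j)$, not the other components $\widetilde D_{j'}$ of $\pi^{-1}(D)$. If $\widehat\gamma'$ visits several preimages (which is exactly the case of interest), then $\widetilde\alpha$ passes through those other preimages, and its projection $\alpha$ meets ${\rm Cl}(D)$ and is therefore \emph{not} a component of $\gamma'\setminus{\rm Cl}(D)$. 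So $\alpha$ cannot be directly identified with one of the $N$ essential open arcs in the definition, and your map is not yet a map into the right set. One can try to repair this by picking an essential sub-arc $\alpha'\subseteq\alpha$ that \emph{is} a component of $\gamma'\setminus{\rm Cl}(D)$ (at least one exists, since if all sub-arcs were inessential then $\alpha\cup{\rm Cl}(D)$ would be contractible). But then your injectivity step collapses: it relies on the endpoints of $\widetilde\alpha$ lying on $\partial\widetilde D_j$, whereas the sub-arc $\alpha'$ has endpoints on $\partial D$ whose lift need not touch $\partial\widetilde D_j$ at all, and a priori two different $\widetilde D_j,\widetilde D_{j'}$ could produce the same $\alpha'$. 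A correct repair seems to require a different bookkeeping (e.g.\ only counting arcs of $\widehat\gamma'\setminus\pi^{-1}({\rm Cl}(D))$ whose two endpoints lie on \emph{distinct} preimages, showing those project to essential arcs, and a combinatorial count of transitions along the cyclic visit sequence), which is not in your sketch. The ``push $\widehat\gamma'$ off $\widetilde D_j$, contradiction'' phrasing in your dichotomy step is also imprecise --- being able to homotope off a contractible set is no contradiction by itself; the intended conclusion should be that $\widehat\gamma'$ is null-homotopic in the annulus.

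In short: your reversal of the paper's argument is a reasonable idea, but the claim that $\pi(\widetilde\alpha)$ is one of the essential open arcs of $\gamma'\setminus{\rm Cl}(D)$ is false in general, and fixing it breaks the injectivity argument as written. The paper avoids all of this by going through the closed arcs $\beta$ and the fact, established in Lemma~\ref{L:EssInt}, that $\beta\cup{\rm Cl}(D)$ is contractible in $M$.
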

\begin{proof}
Indeed, any lift to $\widetilde M$ of each essential intersection $\beta'$ of $\gamma'$ with $D$ intersects only one of the lifts of $D$ to $\widetilde M$ as follows from the fact that $\beta'\cup{\rm Cl}(D)$ is contractible in $M$.  Therefore, if $\gamma'$ is parametrized by $[0,1]$, any lift to $\widetilde M$ of the curve $\gamma'(t),\ t\in[0,1]$, that has at most $N$ essential intersections with $D$ intersects  at most $N$ preimages of $D$. Factoring these preimages by the isometry $T_\gamma$ defined above does not increase the number of such preimages in $\widehat M_\gamma$, and the lemma follows. 
\end{proof}
 
 The annular covering surface $\widehat M$ also carries a  metric induced from $\widetilde M$. The curve $\widehat\gamma$ is a homotopically non-trivial simple closed geodesic in $\widehat M_\gamma$ and the length of $\widehat{\gamma'}$ for any $\gamma'$ homotopic to $\gamma$ is at least that of $\widehat{\gamma}$. 
 Moreover, since the covering map $\widehat\psi_\gamma\:\widehat M_\gamma\to M$ is a local isometry, if $\gamma'$ is rectifiable, then so is $\widehat\gamma'$ and they are isometric to each other, in particular have the same lengths. This applies to $\gamma$ and $\widehat\gamma$ as well. Finally, $\widehat M$ supports an orthogonal projection $\Pi_{\widehat\gamma}$ onto $\widehat\gamma$. It is obtained from the orthogonal projection $\Pi_{\tilde\gamma}$ of $\widetilde M$ onto $\tilde\gamma$ using the quotient by $T_\gamma$. It follows from elementary geometry that the orthogonal projection  $\Pi_{\widehat\gamma}$ is well-defined and is a 1-Lipschitz map.     
 
 Let $\{D_k\}_{k\in\Z}$ be a collection of domains with pairwise disjoint closures  in a surface $M$. 
Let $S=M\setminus \cup_{k\in\Z} D_k$.
If $\gamma$ is a curve in $M$, we define its $S$-\emph{length} by 
$$
l_S(\gamma)=\int_{\gamma\cap S}ds+\sum_{\gamma\cap D_k\neq\emptyset} {\rm diam}(D_k).
$$
If $\gamma$ is a locally rectifiable curve in $M$ and $B$ is a Borel measurable subset of $M$, we denote $\int_{\gamma\cap B}ds$ by $l(\gamma\cap B)$.


\begin{lemma}\label{L:PosFin}
Let $M$ be a closed surface of genus at least 1, and $S\subset M$ be a compact subset of $M$ whose complementary components $\{D_k\}_{k\in\Z}$ have bounded geometry. 
For a given $N\in\N$ there exists a constant $C\ge0$ with the following property. 
If $\Gamma$ is a collection of  closed curves $\gamma'$ in the homotopy class of a  simple closed geodesic $\gamma$ such that each $\gamma'\in\Gamma$ has at most $N$ essential intersections with each $D_k,\ k\in\Z$,
then 
$$
{\rm mod}_S(\Gamma)\le C/l(\gamma)^2.
$$
\end{lemma}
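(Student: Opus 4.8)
\emph{Plan.} The idea is to produce, for each simple closed geodesic $\gamma$, an explicit admissible mass distribution for $\Gamma$ whose continuous part is the constant $1/l(\gamma)$ on $S$. For this it suffices to prove the metric inequality
$$
\int_{\gamma'\cap S}ds+N\sum_{k\,:\,\gamma'\cap D_k\neq\emptyset}\delta_k\ \ge\ l(\gamma)
$$
for every rectifiable $\gamma'\in\Gamma$, where $\delta_k>0$ depend only on $S$ (not on $\gamma$) and satisfy $\sum_k\delta_k^2<\infty$; non-rectifiable closed curves have $2$-modulus zero and form an admissible exceptional family. Granting the inequality, the distribution $\rho$ with $\rho|_S\equiv 1/l(\gamma)$ and $\rho(D_k)=N\delta_k/l(\gamma)$ is admissible for $\Gamma$ and has ${\rm mass}(\rho)=l(\gamma)^{-2}(\sigma(S)+N^2\sum_k\delta_k^2)$, giving the bound ${\rm mod}_S(\Gamma)\le C/l(\gamma)^2$. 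A workable choice is $\delta_k=$ the diameter in $\widetilde M$ of a lift of ${\rm Cl}(D_k)$ under the universal covering $\widetilde M\to M$: this is finite since ${\rm Cl}(D_k)$ is compact and simply connected, it is independent of the lift, and $\delta_k\le 2R_k\le 2Cr_k$ for all but the finitely many $k$ with $R_k$ exceeding the injectivity radius of $M$, so $\sum_k\delta_k^2<\infty$ by bounded geometry (the estimate that also gives ${\rm diam}(D_k)\to0$).

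To prove the inequality I would pass to the annular covering surface $\widehat M_\gamma$ and use the $1$-Lipschitz orthogonal projection $\Pi=\Pi_{\widehat\gamma}\colon\widehat M_\gamma\to\widehat\gamma$, recalling that $\widehat\gamma$ has length $l(\gamma)$ and that $\widehat\psi_\gamma$ is a local isometry restricting to an isometry on every lift of any $D_k$. If $\widehat{\gamma'}$ is the lift of $\gamma'$ to $\widehat M_\gamma$ homotopic to $\widehat\gamma$, then $\widehat\psi_\gamma\circ\widehat{\gamma'}=\gamma'$ gives $\int_{\widehat{\gamma'}\cap\widehat S}ds=\int_{\gamma'\cap S}ds$, where $\widehat S=\widehat\psi_\gamma^{-1}(S)$. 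Since $\Pi$ restricts to the identity on $\widehat\gamma$ and $\widehat\gamma\hookrightarrow\widehat M_\gamma$ is a homotopy equivalence, $\Pi_*$ is an isomorphism on $\pi_1$, so $\Pi\circ\widehat{\gamma'}$ represents a generator of $\pi_1(\widehat\gamma)\cong\Z$ and $\Pi(\widehat{\gamma'})=\widehat\gamma$. Let $\widehat D_1,\widehat D_2,\dots$ be the lifts of the $D_k$ meeting $\widehat{\gamma'}$. For $s\in\widehat\gamma\setminus\bigcup_j\Pi(\widehat D_j)$ the fibre $\Pi^{-1}(s)$ meets $\widehat{\gamma'}$, necessarily at a point of $\widehat S$ (it cannot lie in any $\widehat D_j$, whose $\Pi$-image misses $s$, and $\widehat{\gamma'}$ meets no other lift of a $D_k$). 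Hence $\widehat\gamma\setminus\bigcup_j\Pi(\widehat D_j)\subseteq\Pi(\widehat{\gamma'}\cap\widehat S)$, and since $\Pi$ is $1$-Lipschitz,
$$
\int_{\gamma'\cap S}ds=\int_{\widehat{\gamma'}\cap\widehat S}ds\ \ge\ \mathcal H^1(\Pi(\widehat{\gamma'}\cap\widehat S))\ \ge\ l(\gamma)-\sum_j\mathcal H^1(\Pi(\widehat D_j)).
$$

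It remains to estimate $\mathcal H^1(\Pi(\widehat D_j))$ and to count the $\widehat D_j$. Lifting $\widehat D_j$ once more to $\widetilde M$ and using that $\Pi_{\widehat\gamma}$ is the descent of the $1$-Lipschitz orthogonal projection of $\widetilde M$ onto the axis $\widetilde\gamma$, one sees that $\Pi(\widehat D_j)$ is the image of an interval of length at most $\delta_{k(j)}$ under the quotient of $\widetilde\gamma\cong\mathbb R$ by the translation $T_\gamma$ of length $l(\gamma)$; thus $\mathcal H^1(\Pi(\widehat D_j))\le\min\{\delta_{k(j)},l(\gamma)\}$, in particular $\le\delta_{k(j)}$ once $\delta_{k(j)}<l(\gamma)$. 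By Lemma~\ref{L:NLifts} the number of lifts of $D_k$ meeting $\widehat{\gamma'}$ is at most the number of essential intersections of $\gamma'$ with $D_k$, hence at most $N$. So whenever $l(\gamma)>\max\{\delta_k:\delta_k\ge l(\gamma)\}$ — which fails for only finitely many geodesics $\gamma$, since $\delta_k\to0$ and $l(\gamma)$ is bounded below by the systole — we obtain $\sum_j\mathcal H^1(\Pi(\widehat D_j))\le N\sum_{k:\gamma'\cap D_k\neq\emptyset}\delta_k$, and the metric inequality, hence the lemma, follows for such $\gamma$. For the finitely many remaining $\gamma$, with $S_1:=\{k:\delta_k\ge l(\gamma)\}$ a nonempty finite set, split $\Gamma$ according to whether a curve meets some $D_k$ with $k\in S_1$: the first part has transboundary modulus at most $|S_1|$ (unit mass on a single $D_k$ is admissible for the curves meeting it), which here is $O(l(\gamma)^{-2})$ because $l(\gamma)$ is bounded; the second part is handled by the argument above with $S$ enlarged to $S\cup\bigcup_{k\in S_1}{\rm Cl}(D_k)$, using that an admissible distribution for the enlarged set restricts to one for $S$ on curves avoiding $\bigcup_{k\in S_1}D_k$. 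Subadditivity of ${\rm mod}_S$ then concludes.

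The crux is the projection step: the part of the lifted curve lying over $S$ must, after projecting to the core $\widehat\gamma$, cover all of $\widehat\gamma$ except the boundedly many ``shadows'' $\Pi(\widehat D_j)$ of the domain-lifts the curve crosses, whose total length is controlled by the diameters $\delta_k$. The one genuine subtlety is that these shadows be short \emph{arcs} rather than long ones wrapping around $\widehat\gamma$; this is automatic once the relevant domain is small compared to $l(\gamma)$, which is precisely why the finitely many domains that are large relative to $l(\gamma)$ must be peeled off and treated separately.
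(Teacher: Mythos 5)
Your proof is correct and takes essentially the same route as the paper: pass to the annular cover $\widehat M_\gamma$, bound by $N$ the number of lifts of each $D_k$ meeting the lifted curve via Lemma~\ref{L:NLifts}, and use the $1$-Lipschitz orthogonal projection onto $\widehat\gamma$ to show that a mass distribution with continuous part $1/l(\gamma)$ and discrete part proportional to (lift) diameters is admissible, giving mass $C/l(\gamma)^2$. One small remark: the case split at the end is unnecessary, since $\mathcal H^1(\Pi(\widehat D_j))\le\min\{\delta_{k(j)},l(\gamma)\}\le\delta_{k(j)}$ holds unconditionally and already yields the metric inequality for every $\gamma$; the paper's version of the same argument instead uses the intrinsic diameters $\mathrm{diam}(D_k)$ together with a uniform comparison constant $c$ to control the diameters of lifts.
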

\begin{proof}
Let  $\widehat M_\gamma$ be the annular covering surface associated to $\gamma$.  
Let $\gamma'\in\Gamma$ be an arbitrary rectifiable curve homotopic to $\gamma$, $k\in\Z$ be such that 
$D_k$ intersects $\gamma$, and let $\widehat{D_k}$ be a lift of $D_k$ to $\widehat M_\gamma$ intersecting $\widehat\gamma'$, a lift of $\gamma'$ to $\widehat M_\gamma$ that is homotopic to $\widehat\gamma$. Lemma~\ref{L:NLifts} implies that there are at most $N$ such lifts $\widehat{D_k}$. If $D_k,\ k\in\Z$, has a small diameter relative to the injectivity radius of $M$, then the diameter of each lift  $\widehat{D_k}$ is the same as that of $D_k$. If $D_k$ is relatively large, its lift in $\widehat M_\gamma$ may have a larger diameter, but still uniformly comparable to ${\rm diam}(D_k)$. Indeed, the bounded geometry assumption implies that $\lim_{|k|\to\infty}{\rm diam}(D_k)=0$, and hence there are only finitely many domains $D_k$ that are relatively large. The lifts of $D_k,\ k\in\Z$, to the universal cover $\widetilde M$ have diameters that are uniformly comparable to the diameters of the corresponding $D_k$, and the covering $\widetilde M\to \widehat M_\gamma$ does not increase the diameters.   Therefore, there exists a constant $c\in(0,1)$ that depends only on $M$ and $S$ and such that ${\rm diam}(D_k)\ge c\cdot {\rm diam}(\widehat{D_k})$ for all $k\in\Z$ and each lift $\widehat{D_k}$ of $D_k$ to $\widehat M_\gamma$. Let $\widehat S$ be the complement in $\widehat M_\gamma$ of all the lifts, necessarily having disjoint closures, $\widehat{D_k}$ of $D_k,\ k\in\Z$.  
The covering map $\widehat\psi_\gamma\: \widehat M_\gamma\to M$  is a local isometry, and hence $l(\widehat\gamma'\cap\widehat S)=l(\gamma'\cap S)$. Combining the above facts together we obtain that 
\begin{equation}\label{E:Est1}
l_{\widehat S}(\widehat\gamma')\le\frac{N}{c}l_S(\gamma').
\end{equation}
Since $\widehat\gamma'$ is a closed curve homotopic to $\widehat\gamma$, the orthogonal  projections of $\widehat\gamma'\cap\widehat S$ and all $\widehat{D_k}$ that intersect $\widehat\gamma'$, i.e., the images of these sets under $\Pi_{\widehat\gamma}$ onto $\widehat\gamma$, cover $\widehat\gamma$. The projection $\Pi_{\widehat\gamma}$ is a 1-Lipschitz map, and therefore
\begin{equation}\label{E:Est2}
l(\widehat\gamma)\le l_{\widehat S}(\widehat\gamma').
\end{equation}
Combining inequalities~\eqref{E:Est1} and~\eqref{E:Est2} along with  the fact $l(\widehat\gamma)=l(\gamma)$, we obtain
\begin{equation}\label{E:Est3}
l(\gamma)\le\frac{N}{c}l_S(\gamma'),
\end{equation}
for every rectifiable curve $\gamma'$ in $\Gamma$.

We now choose the mass distribution $\rho$ equal 1 on $S$ and $\rho(D_k)={\rm diam}(D_k),\ k\in\Z$. Since $\{D_k\}_{k\in\Z}$ have bounded geometry, $\rho$ has finite mass that depends only on $M$ and $S$ or $\{D_k\}_{k\in\Z}$. 
We now multiply the mass distribution $\rho$ (both its discrete and continuous parts)  by $N/(c\cdot l(\gamma))$. Inequality~\eqref{E:Est3} gives that such new distribution $\rho'$ is admissible for $\Gamma$. 
The total mass of this distribution is 
$$
{\rm mass}(\rho')=\frac{N^2{\rm mass}(\rho)}{c^2 l(\gamma)^2},
$$
and the lemma follows.
\end{proof}

 
To finish excluding the pseudo-Anosov class, we will need the following proposition.
\begin{proposition}\cite[Section~14.5, Theorem~14.23]{FM12}\label{P:ExpGr}
Let $M$ be a closed surface
and suppose that $[f]\in{\rm Mod}(M)$ is pseudo-Anosov with stretch factor $\lambda>1$. 
If $\gamma$ is any homotopically non-trivial  simple closed curve in $M$, then
$$
\lim_{n\to\infty}\sqrt[n]{l[f^n(\gamma)]}=\lambda,
$$ 
where $l[f^n(\gamma)]$ denote the length of the shortest representative in the homotopy class of $f^n(\gamma)$.
\end{proposition}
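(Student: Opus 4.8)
The plan is to reduce the shortest-representative length of $f^{n}(\gamma)$ to intersection numbers with the invariant foliations and then exploit their eigenvalue scaling. First I would fix a pseudo-Anosov homeomorphism $\phi$ representing $f$, with stable and unstable measured foliations $(\mathcal F^{s},\mu_{s})$, $(\mathcal F^{u},\mu_{u})$ and stretch factor $\lambda>1$, and work in the canonical metric $X$ of $M$, writing $l[\alpha]$ for the length of the shortest representative of a homotopy class $\alpha$ of simple closed curves; since $f^{n}$ is represented by $\phi^{n}$ and $l[\cdot]$ depends only on the class, the quantity in the statement equals $l[\phi^{n}\gamma]$. Set $\ell(\alpha)=i(\alpha,\mathcal F^{s})+i(\alpha,\mathcal F^{u})$, with $i(\cdot,\cdot)$ the geometric intersection number, extended as usual to measured foliations. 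The crucial preliminary step is the uniform comparison
\[
C^{-1}\,\ell(\alpha)\le l[\alpha]\le C\,\ell(\alpha),
\]
valid for all essential simple closed curves $\alpha$ with $C\ge1$ independent of $\alpha$. I would deduce this from three facts: $l[\cdot]$ extends to a continuous, positively homogeneous, strictly positive function on the space $\mathcal{ML}(M)$ of measured laminations (Thurston); $\ell$ is likewise continuous, being a finite sum of intersection functions, and is strictly positive on $\mathcal{ML}(M)\setminus\{0\}$ because $\mathcal F^{s}\cup\mathcal F^{u}$ fills $M$; and $\mathcal{PML}(M)$ is compact, so the ratio $l/\ell$ is bounded above and below. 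When the genus is $1$ (so $X$ is flat), and in particular if $\phi$ is linear Anosov, the comparison is immediate from linear algebra; in general it can alternatively be obtained by working in the singular flat metric of the quadratic differential in whose natural charts $\phi$ is affine, using that flat geodesic representatives are concatenations of saddle connections whose horizontal and vertical variations compute $i(\cdot,\mathcal F^{s})$ and $i(\cdot,\mathcal F^{u})$.

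Next I would compute $\ell(\phi^{n}\gamma)$. From $\phi\cdot(\mathcal F^{s},\mu_{s})=(\mathcal F^{s},\lambda^{-1}\mu_{s})$ and $\phi\cdot(\mathcal F^{u},\mu_{u})=(\mathcal F^{u},\lambda\mu_{u})$, together with the homogeneity of $i$ and its invariance under homeomorphisms, one gets
\[
i(\phi^{n}\gamma,\mathcal F^{s})=\lambda^{n}\,i(\gamma,\mathcal F^{s}),\qquad i(\phi^{n}\gamma,\mathcal F^{u})=\lambda^{-n}\,i(\gamma,\mathcal F^{u}),
\]
for every $n\in\Z$. Since the stable foliation of a pseudo-Anosov map is minimal and filling, $i(\gamma,\mathcal F^{s})>0$ for every essential simple closed curve $\gamma$, so there is $n_{0}$ with
\[
\tfrac12\,i(\gamma,\mathcal F^{s})\,\lambda^{n}\le\ell(\phi^{n}\gamma)=\lambda^{n}\,i(\gamma,\mathcal F^{s})+\lambda^{-n}\,i(\gamma,\mathcal F^{u})\le 2\,i(\gamma,\mathcal F^{s})\,\lambda^{n}\qquad(n\ge n_{0}).
\]

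Combining this with $C^{-1}\ell\le l\le C\ell$ gives, for $n\ge n_{0}$,
\[
\frac{i(\gamma,\mathcal F^{s})}{2C}\,\lambda^{n}\le l[f^{n}(\gamma)]\le 2C\,i(\gamma,\mathcal F^{s})\,\lambda^{n};
\]
taking $n$-th roots, letting $n\to\infty$, and using $i(\gamma,\mathcal F^{s})>0$ yields $\lim_{n\to\infty}\sqrt[n]{l[f^{n}(\gamma)]}=\lambda$, as claimed. I expect the only genuine obstacle to be the uniform comparison $l\asymp\ell$ of the first paragraph: proving it cleanly forces one either to invoke the continuity of hyperbolic length functions on $\mathcal{ML}(M)$ together with compactness of $\mathcal{PML}(M)$, or, for a self-contained route, to enter the flat geometry of the defining quadratic differential. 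A variant avoiding flat geometry altogether uses Bonahon's geodesic currents: write $l[\alpha]=i(L_{X},\alpha)$ for the Liouville current $L_{X}$, note that $\lambda^{-n}\phi^{n}\gamma$ converges in the space of geodesic currents to a positive multiple of $\mu_{u}$, and pass to the limit in the continuous intersection pairing.
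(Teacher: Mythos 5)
The paper itself does not prove this proposition; it is simply cited from Farb--Margalit \cite[Thm.~14.23]{FM12}, so there is no in-text argument to compare against. Your proof is a correct and standard one, and it is essentially the same line of reasoning one finds in the cited source: the heart of the matter is the uniform comparability $l[\alpha]\asymp i(\alpha,\mathcal F^{s})+i(\alpha,\mathcal F^{u})$, which you correctly derive from continuity of the length function and of the intersection pairing on $\mathcal{ML}(M)$, strict positivity of the sum of intersection numbers (because $\mathcal F^{s}$ and $\mathcal F^{u}$ jointly fill $M$), and compactness of $\mathcal{PML}(M)$; the scaling $i(\phi^{n}\gamma,\mathcal F^{s})=\lambda^{n}i(\gamma,\mathcal F^{s})$, $i(\phi^{n}\gamma,\mathcal F^{u})=\lambda^{-n}i(\gamma,\mathcal F^{u})$ then gives the result after taking $n$-th roots, using that $i(\gamma,\mathcal F^{s})>0$ (minimality, absence of closed leaves). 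The only place where one should be slightly careful is that the statement concerns the \emph{canonical} metric of the paper, which on $\T^{2}$ is flat rather than hyperbolic, so the invocation of Thurston's continuity of the hyperbolic length function on $\mathcal{ML}$ needs the flat-torus analogue there; you anticipate this by noting that the genus-one case reduces to linear algebra. The two alternative routes you sketch (singular-flat geometry of the Teichm\"uller quadratic differential, and Bonahon's Liouville geodesic current $L_{X}$ together with $\lambda^{-n}\phi^{n}\gamma\to c\,\mu_{u}$ in the space of currents) are both valid and well known; each trades the $\mathcal{PML}$-compactness step for a different piece of machinery, but none is shorter in substance.
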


Now, if $f$ is a pseudo-Anosov homeomorphism satisfying Theorem~\ref{T:Main}, combining Lemmas~\ref{L:Modcomp}, \ref{L:Confinv}, Corollary~\ref{C:NEI}, Lemma~\ref{L:PosFin}, and  Proposition~\ref{P:ExpGr}, we obtain a contradiction. Indeed, let $\gamma$ be an arbitrary simple closed geodesic in $M$ and we define a curve family $\Gamma$ corresponding to $\gamma$ as in~\eqref{E:Gam}. Then ${\rm mod}_2(\Gamma)=\ln(b/a)/(2\pi)$, and hence  by Lemma~\ref{L:Modcomp} (or Corollary~\ref{C:Modcomp0}) and Lemma~\ref{L:PosFin} we have $0<{\rm mod}_S(\Gamma)<\infty$.  Note that Corollary~\ref{C:NEI} implies the assumption on the number of essential intersections of curves from  each $f^n(\Gamma),\ n\in\Z$.
Using Remark~\ref{R:QCC}, we assume that $f$ is quasiconformal on $M$ and conformal on its residual set $S$ which is invariant under $f$. Thus, all iterates of $f$ are also quasiconformal on $M$ and conformal on $S$. Lemma~\ref{L:Confinv} then gives that ${\rm mod}_S(f^n(\Gamma))={\rm mod}_S(\Gamma)$ for all $n\in\Z$, i.e., ${\rm mod}_S(f^n(\Gamma))$ is a positive number independent of $n$. However, Lemma~\ref{L:PosFin} combined with Proposition~\ref{P:ExpGr} imply ${\rm mod}_S(f^n(\Gamma))\to0,\ n\to\infty$, because $l[f^n(\gamma)]\to\infty$, a contradiction.

\section{Holomorphic quadratic differentials}\label{S:HQD}

\noindent
A \emph{holomorphic quadratic differential} $q$ on a Riemann surface $M$  is a holomorphic  section of the symmetric square of the holomorphic cotangent bundle of $M$. In other words,  $q$ is a $(2,0)$-form given in a local chart by $q=q(z)\, dz^2$, where $z\mapsto q(z)$ is a holomorphic function; see~\cite{St84} for background. 

If $q$ is a holomorphic quadratic differential, the locally defined quantity $|dw|=|q(z)|^{1/2}|dz|$ is called \emph{the length element} of the \emph{metric determined by} $q$, or $q$-\emph{metric}. The length of a curve $\gamma$ in this metric will be denoted by $|\gamma|_q$ and called $q$-\emph{length}. The corresponding \emph{area element} is $|q(z)|dx dy$, where $z=x+i y$. 
The $q$-metric is dominated by the intrinsic metric of $M$, i.e., there is a constant $c>0$ such that the length of every curve $\gamma$ in $M$ is at least $c |\gamma|_q$. In particular, diameters of sets in the intrinsic metric are at least $c$ times the diameters in the $q$-metric. 

If $q$ is a holomorphic quadratic differential, its \emph{horizontal trajectory} is a maximal arc on $M$ given in charts by $\arg(q(z)\, dz^2)=0$. A \emph{vertical arc} is an arc in $M$ given by $\arg(q(z)\, dz^2)=\pi$.  
A holomorphic quadratic differential $q$ has \emph{closed trajectories} if its non-closed horizontal trajectories cover a set of measure zero. 
We need the following special case of~\cite[Theorem~1]{Je57}; see~\cite[Theorems~21.1, 21.7]{St84} for similar results, and~\cite{HM79} for a  general result on realizing measured foliations.

\begin{theorem}\cite[Theorem~1]{Je57}\label{T:ExQD}
Let $\gamma$ be a homotopically non-trivial simple closed curve on a closed Riemann surface $M$, and let $\Gamma$ consist of all simple closed curves homotopic to $\gamma$. 
 Then there exists a holomorphic quadratic differential $q$ on $M$ with closed trajectories, so that the metric $|q(z)|^{1/2}|dz|$ is extremal for the 2-modulus of $\Gamma$.  
\end{theorem}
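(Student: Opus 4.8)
The plan is to realize the metric $|q(z)|^{1/2}|dz|$ as \emph{the} extremal metric for ${\rm mod}_2(\Gamma)$, in the style of the classical Jenkins--Strebel theory of quadratic differentials (cf.~\cite{Je57}, \cite[Theorems~21.1, 21.7]{St84}), and then to read off the closed-trajectory property. First I would establish existence and uniqueness of an extremal metric $\rho_0$ for ${\rm mod}_2(\Gamma)$. Here ${\rm mod}_2(\Gamma)$ is finite: the constant density $1/\ell(\gamma)$, where $\ell(\gamma)$ is the length of the shortest closed geodesic in the homotopy class of $\gamma$, is admissible, since every curve homotopic to $\gamma$ has length at least $\ell(\gamma)$, and it has finite $L^2$-mass. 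The densities admissible for $\Gamma$ form a convex subset of $L^2(M,\sigma)$, and the classical variational argument for the $2$-modulus (parallel to, but simpler than, the proof of Proposition~\ref{P:Extremaldistr}; see also~\cite{Va71}) produces a minimizer $\rho_0$, unique $\sigma$-a.e.\ by the strict convexity of $L^2(M,\sigma)$.

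Next I would analyze the local structure of $\rho_0$ by a variational argument: perturbing $\rho_0$ to $\rho_0(1+\epsilon\psi)$ and using that such a perturbation may not decrease $\int_\gamma\rho\,ds$ for any $\gamma\in\Gamma$, one finds that, after normalizing so that the shortest curves in $\Gamma$ have $\rho_0$-length $1$, $\sigma$-a.e.\ point of $M$ lies on a simple closed $\rho_0$-geodesic homotopic to $\gamma$ of $\rho_0$-length exactly $1$, and that these geodesics foliate a full-measure open subset $M_0\subseteq M$. On $M_0$ I would pass to the natural local coordinate $w$ in which $\rho_0|dz|=|dw|$ and the foliating geodesics are the horizontal segments $\Im w={\rm const}$; the only coordinate ambiguity is $w\mapsto\pm w+{\rm const}$, so $dw^2$ patches to a holomorphic quadratic differential $q_0$ on $M_0$ with $|q_0(z)|=\rho_0(z)^2$ and with closed horizontal trajectories. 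The complement $M\setminus M_0$ is a finite graph of critical $\rho_0$-geodesics along which the cone angles of $\rho_0|dz|$ are integer multiples of $\pi$ that are at least $3\pi$, so $q_0$ has only removable singularities there and extends to a holomorphic quadratic differential $q$ on all of $M$ with $|q(z)|^{1/2}|dz|=\rho_0|dz|$ $\sigma$-a.e.

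It is then worth verifying directly that the metric $|q(z)|^{1/2}|dz|$ is admissible and extremal, since this step is independent of the delicate regularity claims above. By the theory of Jenkins--Strebel differentials, the horizontal trajectories of $q$ sweep out a single maximal cylinder whose core curves are freely homotopic to $\gamma$, of circumference $b=1$ (by the normalization) and some height $H$ in the $q$-metric, and every closed curve freely homotopic to $\gamma$ has $q$-length at least $b$; hence the metric $|q|^{1/2}|dz|$ is admissible for $\Gamma$, and its total mass equals the $q$-area $bH=H$. For the reverse inequality, given an arbitrary admissible density $\tilde\rho$ I would introduce the density $\hat\rho$ determined by $\tilde\rho\,ds=\hat\rho\,|q|^{1/2}|dz|$, which is admissible for $\Gamma$ in the $q$-metric; Cauchy--Schwarz along a core geodesic $\gamma_t$ of $q$-length $b=1$ gives $\int_{\gamma_t}\hat\rho^2\,|q|^{1/2}|dz|\ge 1$, and integrating this over the height parameter $t\in[0,H]$ and using the product structure of the $q$-area yields $\int_M\tilde\rho^2\,d\sigma=\int_M\hat\rho^2\,|q(z)|\,dx\,dy\ge H$. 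Hence ${\rm mod}_2(\Gamma)=H=\int_M|q(z)|\,dx\,dy$, realized by $|q|^{1/2}|dz|$, and, since the non-closed horizontal trajectories of $q$ lie in the measure-zero critical graph, $q$ has closed trajectories.

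The main obstacle is the middle step: proving that the abstract $L^2$-minimizer $\rho_0$ is regular enough to be the modulus of a holomorphic quadratic differential --- that $\sigma$-a.e.\ point lies on a closed $\rho_0$-geodesic of the common length, and that the natural parameter so defined extends to a genuine holomorphic quadratic differential across the critical graph. This is precisely the content of~\cite[Theorem~1]{Je57}. One may instead bypass this analysis by invoking the existence of a Jenkins--Strebel differential associated to the single free homotopy class of $\gamma$ (see~\cite[Theorems~21.1, 21.7]{St84}) and then carrying out only the last step to identify $|q(z)|^{1/2}|dz|$ with the extremal metric for ${\rm mod}_2(\Gamma)$.
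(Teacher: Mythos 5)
The paper treats this result as a black-box citation to Jenkins (with Strebel \cite[Theorems~21.1, 21.7]{St84} and Hubbard--Masur \cite{HM79} as alternatives) and gives no proof of its own, so there is no in-paper argument to compare against line by line. Your outline is a faithful sketch of the Jenkins--Strebel program, and both endpoints of it are correct and complete: the finiteness of ${\rm mod}_2(\Gamma)$ via the admissible constant density $1/\ell(\gamma)$ and the existence/uniqueness of an $L^2$-minimizer by weak compactness and strict convexity, on the one side; and the length--area verification of extremality, on the other. In the last step, passing to $\hat\rho$ with $\tilde\rho\,ds=\hat\rho\,|q|^{1/2}|dz|$, applying Cauchy--Schwarz along each core circle of $q$-length $1$, and integrating in the height coordinate over the single maximal cylinder of the Jenkins--Strebel differential does give $\int_M\tilde\rho^2\,d\sigma\ge H=\int_M|q|\,dx\,dy$, which equals the mass of $|q|^{1/2}$; and since the non-closed trajectories lie in the measure-zero critical graph, $q$ has closed trajectories by definition. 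This part of your argument stands on its own.

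The middle step --- that the abstract $L^2$-minimizer has, a.e., the form $|q|^{1/2}$ for a holomorphic quadratic differential $q$ --- is the real content of the cited theorem, and your sketch (full-measure foliation by closed $\rho_0$-geodesics of the common length, natural parameter with $w\mapsto\pm w+{\rm const}$ ambiguity, finite critical graph with cone angles $k\pi$, $k\ge3$) describes the correct picture but is not a proof; one needs a priori regularity of $\rho_0$ before one can even speak of $\rho_0$-geodesics or cone angles, and that regularity is where the work lies. A minor terminological slip: at a cone point of angle $k\pi$ the extended $q$ has a zero of order $k-2$, so these are genuine zeros rather than removable singularities in the usual sense, although $q$ does of course extend holomorphically across them. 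Your proposed bypass --- invoke the existence of a Jenkins--Strebel differential for the single free homotopy class of $\gamma$ from \cite[Theorems~21.1, 21.7]{St84} or \cite{HM79} and retain only the length--area computation --- is the clean and standard route, and it is in effect exactly what the paper does by leaning on those references.
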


Given a holomorphic quadratic differential $q$ on $M$, one can locally define an analytic function $\Phi(z)=\int\sqrt{q(z)}dz$. Using the function $\Phi$, one can show~\cite[Theorem~9.4]{St84} that each closed horizontal trajectory $\gamma$ of a quadratic differential $q$ can be embedded in a maximal ring domain $R$ swept out by closed horizontal trajectories. It is uniquely determined except  for the case when $M=\T^2$. The function $\Phi^{-1}$ maps a rectangle $(0,a)\times(b_1,b_2)$, for suitably defined $a, b_1, b_2$ conformally onto the maximal ring domain with a vertical arc removed. In the chart given by $\Phi$, the metric $|q(z)|^{1/2}|dz|$ is the Euclidean metric in the rectangle $(0,a)\times(b_1,b_2)$.  
If the points $(0, y)$ and $(a, y),\ b_1<y<b_2$, are identified, the maximal ring domain is conformally equivalent to the straight flat cylinder, which we denote by $\mathcal C_q$, of circumference $a$ and height $b=b_2-b_1$.
The horizontal trajectories are then the horizontal circles on such a cylinder. In particular, closed horizontal trajectories are geodesics in the metric determined by $q$. Note that for the quadratic differential $q$ from Theorem~\ref{T:ExQD}, the length $a$ of the circumference of $\mathcal C_q$ equals 1.


\section{Periodic and reducible diffeomorphisms}\label{S:PR}

\noindent
In this section with exclude the periodic and reducible possibilities for $f$ from Theorem~\ref{T:Main}.

If $M$ is a surface and $f\: M\to M$ a homeomorphism, we say that a curve family $\Gamma$ on $M$ is $f$-\emph{invariant} if $\Gamma=f(\Gamma)$.
If $[f]\in{\rm Mod}(M)$ is either periodic or reducible, then there exists a non-trivial homotopy class $[\gamma]$ of simple closed curves in $M$ that is $f^k$-invariant for some $k\in\N$. By passing to the $k$'th iterate of $f$, we may and will assume that $k=1$. Let $\Gamma$ be the $f$-invariant family of all simple closed curves in $[\gamma]$, or $\Gamma=[\gamma]$.

\begin{lemma}\label{L:NoDiscr}
Let $f$ be a quasiconformal homeomorphism of $M$ that permutes a dense collection of domains $\{D_k\}_{k\in\Z}$ with bounded geometry. Assume further that $f$ is conformal on the residual set $S=M\setminus\cup_{k\in\Z} D_k$. 
Let $\Gamma$ be an $f$-invariant curve family in $M$ with ${\rm mod}_S(\Gamma)<\infty$. If $\rho$ is the extremal mass distribution for ${\rm mod}(\Gamma)$, then $\rho(D_k)=0$ for all $k\in\Z$.
\end{lemma}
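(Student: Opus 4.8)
The plan is to exploit the uniqueness of the extremal distribution from Proposition~\ref{P:Extremaldistr} together with the conformal invariance of the transboundary modulus on an $f$-invariant family. Since $f$ is conformal on $S$, Lemma~\ref{L:Confinv} gives that $f^*(\rho)$ is the extremal mass distribution for $f^{-1}(\Gamma)=\Gamma$, where the last equality uses $f$-invariance of $\Gamma$. By the uniqueness clause of Proposition~\ref{P:Extremaldistr}, $f^*(\rho)=\rho$ as an element of $L^2\oplus l^2$. In particular, looking at the discrete part, we get $\rho(f(D_k))=\rho(D_k)$ for every $k\in\Z$, so the function $k\mapsto\rho(D_k)$ is constant along orbits of the permutation that $f$ induces on the index set.

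Next I would use the fact that $f$ \emph{permutes a dense collection of domains}, so by condition (2) of Definition~\ref{D:PWD} the orbit of every $D_k$ under $f$ is infinite: $f^n(D_k)\cap D_k=\emptyset$ for all $n\neq0$ forces the domains $f^n(D_k)$, $n\in\Z$, to be pairwise distinct. Hence each orbit contributes infinitely many indices $k'$ with $\rho(D_{k'})=\rho(D_k)$. But $\rho\in l^2$ in its discrete part, i.e.\ $\sum_{k\in\Z}\rho(D_k)^2<\infty$. An infinite set of indices carrying a common value $\rho(D_k)$ can only have finite $l^2$-sum if that common value is $0$. Therefore $\rho(D_k)=0$ for every $k\in\Z$, which is the assertion.

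The one point that needs a little care — and what I expect to be the main (minor) obstacle — is the passage from ``$f^*(\rho)$ is extremal for $f^{-1}(\Gamma)$'' to ``$f^*(\rho)=\rho$.'' Here one must check that $f^{-1}(\Gamma)$ literally equals $\Gamma$ (not merely has the same modulus): this is exactly the hypothesis that $\Gamma$ is $f$-invariant, $\Gamma=f(\Gamma)$, hence $\Gamma=f^{-1}(\Gamma)$. One must also confirm that the pullback distribution $f^*(\rho)$ of Lemma~\ref{L:Confinv} has discrete part precisely $\big(\rho(f(D_k))\big)_{k\in\Z}$, which is its definition there, and that the identification of $f^*(\rho)$ with $\rho$ in $L^2\oplus l^2$ legitimately yields equality of the discrete coordinates (which it does, since the two summands $L^2$ and $l^2$ are independent and the $l^2$-coordinates are indexed by $k\in\Z$). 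With these verifications the argument above goes through verbatim. Note that conformality of $f$ on $S$ is used only through Lemma~\ref{L:Confinv}; the contractibility and bounded-geometry hypotheses enter only to guarantee, via Proposition~\ref{P:Extremaldistr}, the existence and uniqueness of $\rho$ in the first place.
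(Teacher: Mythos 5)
Your proof is correct and follows essentially the same route as the paper: apply Lemma~\ref{L:Confinv} together with $f$-invariance of $\Gamma$ to see that $f^*(\rho)$ is again extremal, invoke uniqueness from Proposition~\ref{P:Extremaldistr} to get $\rho(f(D_k))=\rho(D_k)$, note that each $f$-orbit of domains is infinite by Definition~\ref{D:PWD}(2), and then use the $\ell^2$-finiteness of the discrete part to force each orbit's common value to be $0$. The extra care you take in distinguishing $f^{-1}(\Gamma)=\Gamma$ from mere equality of moduli and in checking that the $l^2$-coordinates of $f^*(\rho)$ are $\bigl(\rho(f(D_k))\bigr)_{k}$ is sound and matches the paper's intent.
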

\begin{proof}
From Lemma~\ref{L:Confinv} we know that the pullback distribution $f^*(\rho)$ is also extremal for $\Gamma$ because $\Gamma$ is $f$-invariant. Since the extremal distribution is unique, we conclude, in particular, that $\rho(f(D_k))=\rho(D_k)$ for all $k\in\Z$.  Thus, if $D_k, D_l,\ k,l\in\Z$, are in the same $f$-orbit, then $\rho(D_k)=\rho(D_l)$.  Since $f$ permutes $\{D_k\}_{k\in\Z}$, we have for every $k\in\Z$, $f^n(D_k)\cap D_k=\emptyset$ for all $n\neq0$. In particular, each $f$-orbit of domains is infinite. Since ${\rm mod}_S(\Gamma)<\infty$, we conclude that $\rho(D_k)=0$ for all $k\in\Z$.
\end{proof}

We call a mass distribution $\rho$ as in the previous lemma, i.e., when $\rho(D_k)=0$ for all $k\in
\Z$, \emph{purely continuous}. 

The family $\Gamma=[\gamma]$ is $f$-invariant, but it is unclear whether ${\rm mod}_S(\Gamma)<\infty$. By Theorem~\ref{T:ExQD}, there exists a holomorphic  quadratic differential $q$ on $M$ with closed trajectories, so that the metric $|q(z)|^{1/2}|dz|$ is extremal for the 2-modulus of $\Gamma$. Since the $q$-metric is dominated by the intrinsic metric of $M$, we have ${\rm diam}_q(D_k)\le C {\rm diam}(D_k)$ for some $C>0$ and all $k\in\Z$, where ${\rm diam}_q(D_k)$ is the  diameter of $D_k$ measured in the $q$-metric. Thus, 
$$
\sum_{k\in\Z}{\rm diam}_q(D_k)^2<\infty.
$$
Let $\Gamma_h$ consist of all closed horizontal trajectories of $q$ in $\Gamma$. The curve family $\Gamma_h$ has positive 2-modulus but it is not necessarily $f$-invariant. Since each $\gamma_y\in\Gamma_h,\ b_1<y<b_2$, in the above notations, has $q$-length equal 1, Lemma~\ref{L:EssInt} implies that there exists $N\in\N$ such that each such $\gamma_y$ has at most $N$ essential intersections with every $D_k,\ k\in\Z$. Let $\Gamma_N$ consist of all curves in $\Gamma$ that have at most $N$ essential intersections with every $D_k,\ k\in\Z$. Then $\Gamma_h\subset\Gamma_N\subset\Gamma$ and $\Gamma_N$ is $f$-invariant because $f$ is a homeomorphism of $M$. Lemma~\ref{L:PosFin} implies that ${\rm mod}_S(\Gamma_N)<\infty$. Also, by Lemma~\ref{L:Modcomp} (or Corollary~\ref{C:Modcomp0}), ${\rm mod}_S(\Gamma_N)>0$ since ${\rm mod}_2(\Gamma_N)\ge{\rm mod}_2(\Gamma_h)>0$. 
Proposition~\ref{P:Extremaldistr} gives a unique transboundary extremal mass distribution $\rho_0$ for ${\rm mod}_S(\Gamma_N)$. Lemma~\ref{L:NoDiscr} in turn gives that $\rho_0$ is purely continuous. We now show that this is not possible, excluding the periodic and reducible cases. 

\begin{proposition}\label{P:NotPC}
For any homotopically non-trivial simple closed curve $\gamma$ in $M$, 
the extremal mass distribution $\rho_0$ for the curve family $\Gamma_N$ defined above cannot be purely continuous.
\end{proposition}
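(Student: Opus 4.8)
The plan is to argue by contradiction, assuming that the extremal distribution $\rho_0$ for ${\rm mod}_S(\Gamma_N)$ is purely continuous, i.e., $\rho_0(D_k)=0$ for every $k$. First, if $\sigma(S)=0$ there is nothing to prove: by the convention in the definition of transboundary modulus the continuous part is then disregarded in~\eqref{E:Adm} and in the total mass, so a purely continuous $\rho_0$ carries no data, its left-hand side in~\eqref{E:Adm} equals $0$ for every curve, and it cannot be admissible for the non-exceptional, non-empty family $\Gamma_N\setminus\Gamma_0$ (non-empty since ${\rm mod}_2(\Gamma_N)\ge{\rm mod}_2(\Gamma_h)>0$) — contradicting that $\rho_0$ is admissible. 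So I may assume $\sigma(S)>0$ and regard $\rho_0$ as a Borel mass density on $M$ that vanishes identically on $\bigcup_k D_k$.

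The first substantive step is a lower bound for ${\rm mass}(\rho_0)$ coming from the closed horizontal trajectories of the Jenkins differential $q$ of Theorem~\ref{T:ExQD}. Work on the flat cylinder $\mathcal C_q$ of circumference $1$ and height $b$, with Euclidean coordinates $(u,v)$, $u\in\R/\Z$, $v\in(0,b)$, the horizontal trajectories being the circles $\gamma_v$ of $q$-length $1$; let $\widetilde\rho_0$ denote the density of $\rho_0$ in the $q$-metric, so ${\rm mass}(\rho_0)=\int_M\widetilde\rho_0^{\,2}\,dA_q$. By Fubini, for $v$ outside a Lebesgue-null set the trajectory $\gamma_v\in\Gamma_h\subseteq\Gamma_N$ avoids the $2$-exceptional family for $\rho_0$ and has $|\gamma_v\cap S|_q>0$; for such $v$, admissibility together with $\rho_0(D_k)=0$ gives $\int_{\gamma_v\cap S}\widetilde\rho_0\,ds_q\ge1$. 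Setting $\lambda(v)=|\gamma_v\cap\bigcup_k D_k|_q=1-|\gamma_v\cap S|_q$ (and $\{\lambda=1\}$ is null, otherwise ${\rm mod}_S(\Gamma_N)=\infty$), Cauchy–Schwarz on $\gamma_v\cap S$ yields $\int_{\gamma_v}\widetilde\rho_0^{\,2}\,ds_q\ge(1-\lambda(v))^{-1}$, and integrating in $v$ — using that $\bigcup_k D_k$ is open and dense in $M$, whence $\int_0^b\lambda(v)\,dv=A_q\big(\mathcal C_q\cap\bigcup_k D_k\big)>0$, and that $(1-x)^{-1}>1+x$ on $(0,1)$ — gives
$$
{\rm mass}(\rho_0)\ \ge\ \int_0^b\!\!\int_{\gamma_v}\widetilde\rho_0^{\,2}\,ds_q\,dv\ \ge\ \int_0^b\frac{dv}{1-\lambda(v)}\ >\ b+A_q\!\Big(\mathcal C_q\cap\bigcup_k D_k\Big)\ >\ b.
$$

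The second step, and the heart of the matter, is to contradict the minimality of $\rho_0$ by producing an admissible mass distribution $\rho_1$ for $\Gamma_N$ with ${\rm mass}(\rho_1)<{\rm mass}(\rho_0)$. The point is that by the inequality above $\rho_0$ is forced to put strictly more continuous mass on $\mathcal C_q$ than the geometric density $\chi_S$ would, so it is wasteful: for each $k$ and a small parameter $\varepsilon>0$, one transfers a fraction $\varepsilon$ of the continuous mass carried in a thin collar of $D_k$ inside $S$ into a discrete weight comparable to $\varepsilon\,{\rm diam}_q(D_k)$ on $D_k$. Bounded geometry gives $\sum_k{\rm diam}_q(D_k)^2<\infty$, so the mass added to the discrete part is $O(\varepsilon^2)$, while the mass removed from the continuous part — positive precisely because $\lambda(v)>0$ on a set of positive measure — is of order $\varepsilon$; hence for $\varepsilon$ small the total mass of $\rho_1$ drops strictly below ${\rm mass}(\rho_0)$. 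Since $\rho_1$ is admissible, this gives ${\rm mod}_S(\Gamma_N)\le{\rm mass}(\rho_1)<{\rm mass}(\rho_0)={\rm mod}_S(\Gamma_N)$, the desired absurdity (equivalently, it contradicts the uniqueness of the extremal distribution in Proposition~\ref{P:Extremaldistr}).

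The main obstacle is exactly this last construction: $\rho_1$ must remain admissible for the \emph{entire} family $\Gamma_N$, which contains simple closed curves homotopic to $\gamma$ of unbounded length, curves that wander far from the cylinder $\mathcal C_q$, and non-taut curves that enter the collar of some $D_k$ without meeting $D_k$ itself; for all of these the transferred mass must still add up correctly. I expect this to be arranged using the uniform $S$-length estimate $l_S(\gamma')\ge c\,l(\gamma)/N$ from the proof of Lemma~\ref{L:PosFin} and the bounded-overlap of the balls $\{B(p_k,2R_k)\}$ from Lemma~\ref{L:Boj}, which let one absorb the contributions of such curves into a controlled continuous (or discrete) term of arbitrarily small extra mass. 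The remaining ingredients — convexity of the set of admissible distributions and the summability $\sum_k{\rm diam}_q(D_k)^2<\infty$ — are routine consequences of the bounded-geometry hypothesis.
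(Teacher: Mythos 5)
Your first substantive step — obtaining a strict lower bound on ${\rm mass}(\rho_0)$ via Fubini and Cauchy--Schwarz along the closed horizontal trajectories on the cylinder $\mathcal C_q$ — is correct and is essentially the content of the paper's Lemma~\ref{L:CoPa}, though your derivation is more elaborate than needed; the paper observes directly that a purely continuous $\rho_0$ is admissible for the family $\Gamma_h$ of closed horizontal trajectories, hence ${\rm mass}(\rho_0)\ge{\rm mod}_2(\Gamma_h)={\rm area}_q(\mathcal C_q)$, which already exceeds ${\rm area}_q(S)$ because the $D_k$ have nonempty interior.

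The genuine gap is in the second step, where you must exhibit a competitor with strictly smaller total mass. Your proposed ``mass transfer from thin collars of $D_k$ into discrete weights'' does not establish admissibility for the entire family $\Gamma_N$, and you explicitly acknowledge this (``I expect this to be arranged using\ldots''). The difficulty you anticipate is real: a curve $\gamma'\in\Gamma_N$ may spend substantial $\rho_0$-mass in the collar of $D_k$ without entering $D_k$ itself, so removing an $\varepsilon$-fraction of the collar mass is not compensated by adding a discrete weight, and there is no calibration between $\varepsilon\,{\rm diam}_q(D_k)$ and the $\rho_0$-mass that $\gamma'$ actually carries in that collar. The paper avoids all of this by fixing \emph{one} competitor that is already known to be admissible: take $\rho_1$ to equal $|q|^{1/2}$ on $S$ and $\rho_1(D_k)=(N/c)\,{\rm diam}_q(D_k)$, admissibility having been established for exactly this kind of distribution in the proof of Lemma~\ref{L:PosFin}. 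The key trick you are missing is then to take the \emph{convex combination} $\rho_t=(1-t)\rho_0+t\rho_1$: the admissibility condition~\eqref{E:Adm} is affine in $\rho$, so $\rho_t$ is admissible for each $t\in[0,1]$ with no new estimate required. Expanding
\[
{\rm mass}(\rho_t)-{\rm mass}(\rho_0)=2t\int_S\rho_0(\rho_1-\rho_0)\,d\sigma+O(t^2),
\]
Cauchy--Schwarz and the strict inequality ${\rm mass}(\rho_1|_S)<{\rm mass}(\rho_0)$ from the first step make the linear coefficient strictly negative, so ${\rm mass}(\rho_t)<{\rm mass}(\rho_0)$ for small $t>0$, contradicting extremality. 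Replacing your transfer scheme with this convex interpolation argument closes the gap.
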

\begin{proof} 
Suppose that $\rho_0$ is
purely continuous.
Let $\rho_1$ be the mass distribution for $\Gamma_N$ that is given in local charts as $|q(z)|^{1/2}$ on $S$ and $\rho_1(D_k)=(N/c)\, {\rm diam}_q(D_k),\ k\in\Z$, where $c$ is a constant as in the proof of Lemma~\ref{L:PosFin}. As in Lemma~\ref{L:PosFin}, this distribution is admissible for $\Gamma_N$, it has finite total mass, but it is not necessarily extremal. To finish the proof, we need the following lemma.

\begin{lemma}\label{L:CoPa}
The $L^2$-norm of the continuous part of the mass distribution $\rho_1$ is strictly less than $\|\rho_0\|_{L^2}=\|\rho_0\|_{L^2\oplus l^2}$. 
\end{lemma}
\begin{proof}
First, $\|\rho_1\|_{L^2}={\rm area}_q(S)$, the area of $S$ in the $q$-metric. Since $D_k,\ k\in\Z$, have interior, it is strictly less than the $q$-area of the whole cylinder $\mathcal C_q$, which is the 2-modulus of the family $\Gamma_h$ of all closed horizontal trajectories of $q$ by Theorem~\ref{T:ExQD}.  Since each closed horizontal trajectory $\gamma_y$ of $q$ is homotopic to $\gamma$, integrating $\rho_0$ over each such trajectory gives
$$
\int_{\gamma_y}\rho_0\,  ds=\int_{\gamma_y\cap S}\rho_0\,  ds\ge1,
$$
for all $y,\ b_1<y<b_2$. This means that $\rho_0|_S$ is admissible for $\Gamma_h$. Therefore, 
$$\|\rho_0\|_{L^2}={\rm mass}(\rho_0)\ge{\rm mod}_2(\Gamma_h)={\rm area}_q(\mathcal C_q)>{\rm mass}(\rho_1|_S)=\|\rho_1\|_{L^2}.
$$
\end{proof}

Now, consider mass distributions
$$
\rho_t=(1-t)\rho_0+t\rho_1,\quad 0\le t\le1. 
$$
This is a one-parameter family of mass distributions connecting $\rho_1$ to $\rho_0$. Convexity gives that each of this distributions is admissible for $\Gamma_N$. Strict convexity of the ball of radius ${\rm mass}(\rho_0)$ in $L^2\oplus l^2$ along with Lemma~\ref{L:CoPa} imply that there exists $t$ close to 0 such that ${\rm mass}(\rho_t)<{\rm mass}(\rho_0)$.
%
Indeed, 
$$
{\rm mass}(\rho_t)-{\rm mass}(\rho_0)\sim 2t\int_S\rho_0(\rho_1-\rho_0)d\sigma,\quad t\to0.
$$
But
$$
\int_S\rho_0\rho_1\, d\sigma\le\left({\rm mass}(\rho_0)\right)^{1/2}\left({\rm mass}(\rho_1|_S)\right)^{1/2}<{\rm mass}(\rho_0)=\int_S\rho_0^2d\sigma,
$$
and so $
{\rm mass}(\rho_t)<{\rm mass}(\rho_0)$ for small $t>0$.
This contradicts the assumption that $\rho_0$ is extremal for $\Gamma_N$ and concludes the proof of Proposition~\ref{P:NotPC}.
\end{proof}


\begin{thebibliography}{BHK}



\bibitem[Bo88]{Bo88} B.~Bojarski, \emph{Remarks on Sobolev imbedding inequalities}, Complex analysis, Joensuu 1987, 52--68, Lecture Notes in Math., 1351, Springer, Berlin, 1988.


\bibitem[Bo11]{Bo11} M. Bonk, \emph{Uniformization of Sierpi\'nski carpets in the plane}, Invent. math. 186 (2011), 559--665.

%
\bibitem[BM13]{BM13} M. Bonk, S. Merenkov, \emph{ Quasisymmetric rigidity of square Sierp\'inski carpets}, Ann. of Math. 177 (2013), 591--643. 

\bibitem[BF14]{BF14} B.~Branner, N.~Fagella, \emph{Quasiconformal surgery in holomorphic dynamics.} With contributions by X.~Buff, S.~Bullett, A.~L.~Epstein, P.~Ha\"issinsky, C.~Henriksen, C.~L.~Petersen, K.~M.~Pilgrim, T.~Lei, M.~Yampolsky. Cambridge Studies in Advanced Mathematics, 141. Cambridge University Press, Cambridge, 2014.

\bibitem[Bu78]{Bu78}P.~Buser, \emph{The collar theorem and examples,}
Manuscripta Math.\ 25 (1978), no.\ 4, 349--357.

%
%

\bibitem[FM12]{FM12}B. Farb, D. Margalit, \emph{A primer on mapping class groups.}
Princeton Math. Ser., 49.
Princeton University Press, Princeton, NJ, 2012, xiv+472 pp.

\bibitem[He01]{He01} J. Heinonen, \emph{Lectures on analysis on metric spaces,}
Springer-Verlag, New York, 2001.

\bibitem[He79]{He79} M.-R. Herman, \emph{Sur la conjugaison diff\'erentiable des diff\'eomorphismes du cercle \`a des rotations},  Inst.\ Hautes \'Etudes Sci.\ Publ.\ Math.\ No.\ 49 (1979), 5--233. 

\bibitem[HM79]{HM79} J. Hubbard, H. Masur,  \emph{Quadratic differentials and foliations,} Acta Math.\ 142 (1979), no.\ 3-4, 221--274.

%




%

\bibitem[Je57]{Je57} J. A. Jenkins, \emph{On the existence of certain general extremal metrics}, Ann.\ of Math.\ (2) 66 (1957), 440--453.

\bibitem[KM10]{KM10} F.~Kwakkel, V.~Markovic, \emph{Topological entropy and diffeomorphisms of surfaces with wandering domains}, Ann.\ Acad.\ Sci.\ Fenn.\ Math.\ 35 (2010), no.\ 2, 503--513.








\bibitem[Sch95]{Sch95}  O. Schramm, \emph{Transboundary extremal length}, J.\ Anal.\ Math., 66 (1995), 307--329.

\bibitem[St84]{St84} K. Strebel, \emph{Quadratic differentials},
Ergeb. Math. Grenzgeb. (3), 5.
Springer-Verlag, Berlin, 1984, xii+184 pp.

\bibitem[Su78]{Su78} D. Sullivan, \emph{On the ergodic theory at infinity of an arbitrary discrete group of hyperbolic motions}, Riemann surfaces and related topics: Proceedings of the 1978 Stony Brook
Conference (State Univ. New York, Stony Brook, N.Y., 1978),  pp. 465--496, Ann.\ of Math.\ Stud., 97, Princeton Univ. Press, Princeton, N.J., 1981.

\bibitem[Tu80]{Tu80} P.~Tukia, \emph{On two-dimensional quasiconformal groups}, Ann.\ Acad.\ Sci.\ Fenn.\ Ser.\ A I Math.\ 5 (1980), no.\ 1, 73--78.


\bibitem[Va71]{Va71} J.~V\"ais\"al\"a, \emph{Lectures on  $n$-dimensional quasiconformal mappings},
Lecture Notes in Math., Vol.\ 229
Springer-Verlag, Berlin-New York, 1971, xiv+144 pp.



\bibitem[Yo80]{Yo80} K.~Yosida, \emph{Functional analysis}, Sixth edition. Grundlehren der Mathematischen Wissenschaften, 123. Springer-Verlag, Berlin-New York, 1980.

\end{thebibliography}
\end{document}